\newtheorem{theorem}{Theorem}[section]
\newtheorem{proposition}[theorem]{Proposition}
\newtheorem{lemma}[theorem]{Lemma}
\theoremstyle{definition}
\newtheorem{example}[theorem]{Example}
\newtheorem{construction}[theorem]{Construction}
\theoremstyle{remark}
\newtheorem*{remark}{Remark}
\numberwithin{equation}{section}
\numberwithin{figure}{section}
\def\C{\mathbb C}
\def\R{\mathbb R}
\def\T{\mathbb T}
\def\Z{\mathbb Z}
\def\phi{\varphi}
\newcommand{\mb}[1]{{\textbf {\textit#1}}}
\renewcommand{\ge}{\geqslant}
\renewcommand{\le}{\leqslant}
\newcommand{\zp}{\mathcal Z_P}
\begin{document}

\title[Moment-angle manifolds and Lagrangian embeddings]%
{Intersections of quadrics,\\ moment-angle manifolds, and\\
Hamiltonian-minimal Lagrangian embeddings}

\author{Andrey Mironov}
\address{Sobolev Institute of Mathematics, 4 Acad. Koptyug avenue, 630090 Novosibirsk, Russia,
\quad \emph{and}\newline\indent Laboratory of Geometric Methods in
Mathematical Physics, Moscow State University}
\email{mironov@math.nsc.ru}

\author{Taras Panov}
\address{Department of Mathematics and Mechanics, Moscow
State University, Leninskie Gory, 119991 Moscow, Russia,
\newline\indent Institute for Theoretical and Experimental Physics,
Moscow, Russia,\quad \emph{and}
\newline\indent Institute for Information Transmission Problems,
Russian Academy of Sciences} \email{tpanov@mech.math.msu.su}

\thanks{The first author was supported by grants МД-5134.2012.1 and НШ-544.2012.1 from the
President of Russia. The second author was supported by grants
МД-111.2013.1 and НШ-4995-2012.1 from the President of Russia and
RFBR grant~11-01-00694. Both authors were supported by RFBR
grant~12-01-92104-ЯФ, grants from Dmitri Zimin's `Dynasty'
foundation, and grant no.~2010-220-01-077 of the Government of
Russia.}



\begin{abstract}
We study the topology of Hamiltonian-minimal Lagrangian
submanifolds $N$ in $\C^m$ constructed from intersections of real
quadrics in a work of the first author. This construction is
linked via an embedding criterion to the well-known Delzant
construction of Hamiltonian toric manifolds. We establish the
following topological properties of~$N$: every $N$ embeds as a
submanifold in the corresponding moment-angle manifold $\mathcal
Z$, and every $N$ is the total space of two different fibrations,
one over the torus $T^{m-n}$ with fibre a real moment-angle
manifold~$\mathcal R$, and the other over a quotient of $\mathcal
R$ by a finite group with fibre a torus. These properties are used
to produce new examples of Hamiltonian-minimal Lagrangian
submanifolds with quite complicated topology.
\end{abstract}

\maketitle

\section{Introduction}
In this paper we study the topology of a class of
Hamiltonian-minimal ($H$-minimal) Lagrangian submanifolds in
${\mathbb C}^m$ obtained from intersections of real quadrics.

Let $M$ be a K\"ahler manifold. A Lagrangian submanifold $N\subset
M$ is called \emph{$H$-minimal} if its volume is critical under
Hamiltonian deformations. The simplest example  of an $H$-minimal
Lagrangian submanifold is the Clifford torus~\cite{Oh}
$$
 S^1(r_1)\times\dots \times S^1(r_m)\subset {\mathbb C}^m,
$$
where $S^1(r_k)\subset {\mathbb C}$ is a circle of radius $r_k$.
Other $H$-minimal Lagrangian tori in ${\mathbb C}^2$ were
constructed in~\cite{CU} and~\cite{HR1}, an example of an immersed
$H$-minimal Lagrangian Klein bottle was given in~\cite{HR2}, and
more higher dimensional examples were obtained in~\cite{AC}.
In~\cite{miro04} the first author suggested a universal method for
constructing $H$-minimal Lagrangian immersions
$N\looparrowright\C^m$ from intersections of real
quadrics~$\mathcal R$.
By this method one can construct $H$-minimal Lagrangian immersions
in ${\mathbb C}^m$ of the Klein bottle ${\mathcal K}^m$,
$S^{m-1}\times S^1$, ${\mathcal K}^{m-1}\times S^1$ and other
manifolds.

In this paper we give effective criteria for the map $N\rightarrow
{\mathbb C}^m$ to be an embedding (Theorems~\ref{nembed}
and~\ref{delz}). This is done by exploring the link between
intersections of quadrics, simple polytopes and moment-angle
manifolds, and using methods of \emph{toric
topology}~\cite{bu-pa02},~\cite{pano10}. It turns out that
$N\rightarrow {\mathbb C}^m$ is an $H$-minimal Lagrangian
embedding precisely when the polytope corresponding to the
intersection of quadrics is \emph{Delzant}. Delzant polytopes
classify Hamiltonian toric manifolds~\cite{delz88}, which can be
obtained as symplectic quotients of $\C^m$ by a torus action; our
intersection of quadrics is a level set for the corresponding
moment map (see e.g.~\cite[\S8.2]{bu-pa02}). In
Proposition~\ref{nprop} we show that $N$ is embedded in the
moment-angle manifold $\mathcal Z$ and that $N$ is the total space
of two different fibre bundles. The first bundle is over a torus
with fibre the intersection of quadrics~$\mathcal R$ and the
second is a principal torus bundle over a quotient of $\mathcal R$
by a finite group. The latter quotient is known as a \emph{small
cover} over a simple polytope~\cite{da-ja91}. We also give a
topological classification of manifolds $N$ in the case when
$\mathcal R$ is an intersection of two quadrics (see
Theorem~\ref{2qemb}).

It is natural to ask what closed manifolds can be embedded in
${\mathbb C}^m$ as Lagrangian submanifolds. There are various
topological restrictions for such embeddings. For example, a
manifold $M$ with $H^1(M,{\mathbb R})=0$ cannot be embedded as a
Lagrangian submanifold in ${\mathbb C}^m$~\cite{Gr}. There are no
Lagrangian embeddings of an even-dimensional Klein bottle in
${\mathbb C}^{2m}$~\cite{nemi09} (for the case $m=1$ see
also~\cite{Sh}). Our construction gives a large family of
Lagrangian submanifolds in ${\mathbb C}^m$ of quite complicated
topology. For example, there is a Lagrangian submanifold
in~$\C^{5}$ which is the total space of a fibration over $T^3$
with fibre a surface of genus 5 (see Example~\ref{3quad}).

A modification of the construction of $H$-minimal Lagrangian
submanifolds in $\C^m$ can be used to produce $H$-minimal
Lagrangian submanifolds in $\C P^{m-1}$, see~\cite{miro04}.
Namely, if $N$ is an $H$-minimal Lagrangian cone, then, taking the
intersection of $N$ with the unit sphere and factorising by the
diagonal circle action, we obtain an $H$-minimal Lagrangian
submanifold in ${\mathbb C}P^{m-1}$. Large families of new
explicit examples can be obtained by this procedure. These include
$H$-minimal Lagrangian immersions of tori in ${\mathbb C}P^{2}$
and ${\mathbb C}P^{3}$ described in \cite{Mi1}, \cite{MZ},
\cite{Hu}. For more projective examples see~\cite{CLU}, \cite{CG}
and~\cite{HK}.
As in the case of~$\C^m$, there are topological restrictions for
Lagrangian embeddings in~$\mathbb C P^{m-1}$ (see for example
\cite{Bi},~\cite{Se}).

Hamiltonian stability is an important related property of
Lagrangian submanifolds. According to a result of~\cite{Oh}, the
Clifford torus is Hamiltonian-stable as an $H$-minimal Lagrangian
submanifold in~${\mathbb C}^m$. Other examples of
Hamiltonian-stable submanifolds can be found in~\cite{AO1},
\cite{AO2}. It would be interesting to analyse the Hamiltonian
stability of the $H$-minimal submanifolds~$N$ considered here.

\medskip

We use the following notation throughout the paper:
\begin{itemize}
\item $\Z^m$, $\R^m$, and $\C^m$ are the standard integer lattice of
rank~$m$, the standard real space, and the standard complex space,
respectively;

\item $\T^m=\bigl\{(e^{2\pi i\chi_1},\ldots,e^{2\pi i\chi_m})\in\C^m\bigr\}$, where
$(\chi_1,\ldots,\chi_m)\in\R^m$ is the standard $m$-torus;

\item $[m]=\{1,\ldots,m\}$ is the standard ordered set of $m$
elements;

\item $\Z\langle\mb a_1,\ldots,\mb a_k\rangle$ is the set of integer linear combinations of vectors
$\mb a_1,\ldots,\mb a_k$;

\item $\sigma\langle\mb a_1,\ldots,\mb a_k\rangle$ is the cone (the set of nonnegative $\R$-linear combinations)
generated by vectors $\mb a_1,\ldots,\mb a_k\in\R^m$;

\item $\R^m_\ge=\{(y_1,\ldots,y_m)\in\R^m\colon y_i\ge0
\text{ for all }i\}$ is the standard positive cone (orthant), that
is, the cone generated by the standard basis.
\end{itemize}

The authors are grateful to Stefan Nemirovski for stimulating
discussions of known examples of Lagrangian submanifolds and
Lagrangian non-embeddability results. We thank the referee for
drawing our attention to the work of Y.~Dong~\cite{dong07} and
encouraging us to further explore the relationship between our
construction and symplectic reduction.

\section{Intersections of quadrics}\label{inter}

Assume given a set of $m$ vectors
\[
  \Gamma=\bigl\{\gamma_k=(\gamma_{1,k},\ldots,\gamma_{m-n,k})\in\R^{m-n},\quad
  1\le k\le m\bigr\}
\]
and a vector $\mb c=(c_1,\ldots,c_{m-n})\in\R^{m-n}$. We define
the following intersections of $m-n$ quadrics in $\R^{m}$ and
$\C^m$, respectively:
\begin{align}
  \mathcal R_\Gamma&=\Bigl\{\mb u=(u_1,\ldots,u_m)\in\R^m\colon
  \sum_{k=1}^m\gamma_{jk}u_k^2=c_j,\quad\text{for }
  1\le j\le m-n\Bigr\},\label{rquad}\\
  \mathcal Z_\Gamma&=\Bigl\{\mb z=(z_1,\ldots,z_m)\in\C^m\colon
  \sum_{k=1}^m\gamma_{jk}|z_k|^2=c_j,\quad\text{for }
  1\le j\le m-n\Bigr\}\label{zquad}.
\end{align}

We consider these intersections of quadrics up to \emph{linear
equivalence}, which corresponds to applying a nondegenerate linear
transformation of $\R^{m-n}$ to $\Gamma$ and~$\mb c$. Obviously,
such a linear equivalence does not change the sets $\mathcal
R_\Gamma$ and~$\mathcal Z_\Gamma$.


A version of the following proposition appeared in~\cite{lope89},
and its proof is a modification of the argument
in~\cite[Lemma~0.3]{bo-me06}.
\begin{proposition}\label{rzsmooth}
The intersections of quadrics~\eqref{rquad} and~\eqref{zquad} are
nonempty and nondegenerate if and only if the following two
conditions are satisfied:
\begin{itemize}
\item[(a)] $\mb c\in
\sigma\langle\gamma_1,\ldots,\gamma_m\rangle$;\\[-0.7\baselineskip]

\item[(b)] if $\mb
c\in\sigma\langle\gamma_{i_1},\ldots\gamma_{i_k}\rangle$, then
$k\ge m-n$.
\end{itemize}
Under these conditions, $\mathcal R_\Gamma$ and $\mathcal
Z_\Gamma$ are smooth submanifolds in $\R^m$ and $\C^m$ of
dimension $n$ and $m+n$, respectively, and the vectors
$\gamma_1,\ldots,\gamma_m$ span~$\R^{m-n}$.
\end{proposition}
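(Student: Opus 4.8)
The plan is to analyze the smoothness and nonemptiness of the quadrics via the standard moment-map / convexity picture, treating $\mathcal R_\Gamma$ and $\mathcal Z_\Gamma$ simultaneously, since replacing each $u_k$ by $|z_k|$ shows that both are governed by the same data on the positive orthant. Concretely, consider the linear map $\Gamma\colon\R^m\to\R^{m-n}$ sending the $k$-th basis vector to $\gamma_k$, and let $y_k=u_k^2$ (resp. $y_k=|z_k|^2$), so that $\mathcal R_\Gamma$ and $\mathcal Z_\Gamma$ project onto the polyhedral set $P=\{\mb y\in\R^m_\ge\colon \Gamma\mb y=\mb c\}=\Gamma^{-1}(\mb c)\cap\R^m_\ge$. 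The first observation is that $P$ is nonempty if and only if $\mb c\in\sigma\langle\gamma_1,\dots,\gamma_m\rangle$, which is exactly condition~(a); this is immediate because $\Gamma(\R^m_\ge)$ is precisely that cone. Since $\mathcal R_\Gamma,\mathcal Z_\Gamma$ are nonempty iff $P$ is, we get the nonemptiness half.

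Next I would compute the differential of the defining map $\mb u\mapsto\bigl(\sum_k\gamma_{jk}u_k^2\bigr)_j$ at a point $\mb u\in\mathcal R_\Gamma$: the Jacobian has rows $2(\gamma_{j1}u_1,\dots,\gamma_{jm}u_m)$, so it fails to have full rank $m-n$ exactly when the vectors $\{\gamma_k\colon u_k\neq0\}=\{\gamma_k\colon k\notin I_{\mb u}\}$ fail to span $\R^{m-n}$. Thus $\mathcal R_\Gamma$ is a smooth $n$-manifold iff for every $\mb u\in\mathcal R_\Gamma$ the vectors $\{\gamma_k\colon k\notin I_{\mb u}\}$ span; the same computation for $\mathcal Z_\Gamma$ (differentiating $\sum_k\gamma_{jk}|z_k|^2$, writing $z_k=x_k+iy_k$) gives the identical spanning condition with $I_{\mb z}$ in place of $I_{\mb u}$, and yields dimension $m+n$. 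So the whole statement reduces to: condition~(b) $\iff$ for every feasible $\mb u$ (equivalently, for every face of $P$ meeting the relevant stratum) the vectors indexed by the nonzero coordinates span $\R^{m-n}$. I would also note at this point that (b) applied with the full index set $\{1,\dots,m\}$ forces $\gamma_1,\dots,\gamma_m$ to span $\R^{m-n}$ (the last assertion of the proposition).

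For the equivalence of~(b) with nondegeneracy, the key dictionary is: a subset $\{u_k\neq 0\}$ realized by some $\mb u\in\mathcal R_\Gamma$ corresponds to a nonempty face of $P$, and on a face where the coordinates in $I$ vanish, feasibility says precisely $\mb c\in\sigma\langle\gamma_k\colon k\notin I\rangle$. So the bad situation — some feasible point whose nonzero $\gamma_k$'s do not span — is exactly the existence of a proper subset $\{\gamma_{i_1},\dots,\gamma_{i_k}\}$ with $\mb c\in\sigma\langle\gamma_{i_1},\dots,\gamma_{i_k}\rangle$ but $\{\gamma_{i_1},\dots,\gamma_{i_k}\}$ not spanning $\R^{m-n}$, i.e. contained in a proper subspace, which (after discarding redundant generators) can be arranged with $k<m-n$. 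Conversely if (b) fails, a witnessing small subset with $\mb c$ in its cone produces, by taking $u_k^2$ equal to the corresponding nonnegative coefficients and $u_k=0$ otherwise, a point of $\mathcal R_\Gamma$ at which smoothness breaks. The main obstacle I anticipate is the bookkeeping in this last step: passing cleanly between "the cone $\sigma\langle\gamma_{i_1},\dots,\gamma_{i_k}\rangle$ contains $\mb c$ with few generators" and "there is a face of $P$ on which the active $\gamma$'s lie in a proper subspace," making sure one extracts a genuinely minimal (affinely/linearly small) generating subset and that the constructed point is honestly feasible and nondegeneracy honestly fails there. Once the polyhedral translation is set up, dimension counts and the $\mathcal R$-versus-$\mathcal Z$ parallel are routine, so I would present the $\mathcal R$ case in detail and remark that the $\mathcal Z$ case is verbatim with $|z_k|$ replacing $u_k$.
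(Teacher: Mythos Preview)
Your proposal is correct and follows essentially the same route as the paper: both reduce nonemptiness to condition~(a), identify the Jacobian rank at $\mb u$ with the rank of $\{\gamma_k\colon k\notin I_{\mb u}\}$, and then link the spanning condition to~(b). The only cosmetic difference is that where you write ``after discarding redundant generators'' to pass from a non-spanning subset to one of size $<m-n$, the paper names this step explicitly as the Carath\'eodory theorem.
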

\begin{proof}
We give a proof for the case of~$\mathcal R_\Gamma$; the case of
$\mathcal Z_\Gamma$ is treated similarly. First, assume that (a)
and (b) are satisfied. Then (a) implies that $\mathcal
R_\Gamma\ne\varnothing$. Let $\mb u\in\mathcal R_\Gamma$. Then the
rank of the matrix of gradients of~\eqref{rquad} at $\mb u$ equals
to
\[
  \mathop{\mathrm{rk}}\{\gamma_k\colon u_k\ne0\}.
\]
We have $\mb c\in\sigma\langle\gamma_k\colon u_k\ne0\rangle$. By
the Carath\'eodory theorem, $\mb c$ is in a cone generated by some
$m-n$ of those vectors, that is, $\mb
c\in\sigma\langle\gamma_{k_1},\ldots,\gamma_{k_{m-n}} \rangle$,
where $u_{k_i}\ne0$ for $i=1,\ldots,m-n$. Moreover, the vectors
$\gamma_{k_1},\ldots,\gamma_{k_{m-n}}$ are linearly independent
(otherwise, again by the Carath\'eodory theorem, we obtain a
contradiction with~(b)). This implies that the gradients of $m-n$
quadrics in~\eqref{rquad} are linearly independent at~$\mb u$, and
therefore $\mathcal R_\Gamma$ is smooth $n$-dimensional.

To prove the other implication we observe that if (b) fails, that
is, $\mb c$ is in the cone generated by some $m-n-1$ vectors of
$\gamma_1,\ldots,\gamma_m$, then there is a point $\mb
u\in\mathcal R_\Gamma$ with at least $n+1$ zero coordinates. The
gradients of quadrics in~\eqref{rquad} cannot be linearly
independent at such~$\mb u$.
\end{proof}

The torus $\T^m$ acts on $\mathcal Z_\Gamma$ coordinatewise.
Similarly, the `real torus' $(\Z/2)^m\subset\T^m$ (corresponding
to $(\chi_1,\ldots,\chi_m)\in\frac12\Z^m$) acts on~$\mathcal
R_\Gamma$.

From now on we assume that the conditions of
Proposition~\ref{rzsmooth} are satisfied. Moreover, we assume that
\begin{itemize}
\item[(c)]
the vectors $\gamma_1,\ldots,\gamma_m$ generate a lattice $L$
in~$\R^{m-n}$.
\end{itemize}
Obviously, conditions (a)--(c) are invariant under linear
equivalence. The lattice $L$ has full rank by
Proposition~\ref{rzsmooth}, that is, $L\cong\Z^{m-n}$. By changing
$\Gamma$ in its linear equivalence class if necessary, we may
assume that $L$ is the standard lattice $\Z^{m-n}\subset\R^{m-n}$.
Let
\[
  L^*=\{\lambda^*\in{\mathbb R}^{m-n}\colon
  \langle\lambda^*,\lambda\rangle\in{\mathbb Z}\text{ for all }\lambda\in L\}
\]
be the dual lattice.

The vectors $\gamma_i$ define an $(m-n)$-dimensional torus
subgroup in $\T^m$, namely,
\[
  T_\Gamma=\bigl\{\bigr(e^{2\pi i\langle\gamma_1,\varphi\rangle},
  \ldots,e^{2\pi i\langle\gamma_m,\varphi\rangle}\bigl)\in\T^m\bigr\},
  \quad\text{where }\varphi\in\R^{m-n};
\]
the lattice of characters of $T_\Gamma$ is~$L$. We shall represent
elements of $T_\Gamma$ by $\varphi\in\R^{m-n}$, noting that
$T_\Gamma$ is identified with the quotient $\R^{m-n}/\displaystyle
L^*$. We define
\[
  D_\Gamma=\frac12 L^*/L^*\cong(\Z/2)^{m-n}.
\]
Note that $D_\Gamma$ embeds canonically as a subgroup in
$T_\Gamma$.

For any $\mb z=(z_1,\ldots,z_m)\in\mathcal Z_\Gamma$, we have the
sublattice
\[
  L_{\mb z}=\Z\langle\gamma_k\colon z_k\ne0\rangle\subset
  L=\Z\langle\gamma_1,\ldots,\gamma_m\rangle.
\]
The following lemma is proved by the same argument as
Proposition~\ref{rzsmooth}.

\begin{lemma}\label{fullrank}
For any $\mb z\in\mathcal Z_\Gamma$, the sublattice $L_{\mb z}$
has full rank~$m-n$, and the same holds for any $\mb u\in\mathcal
R_\Gamma$ and the sublattice $L_{\mb u}$.
\end{lemma}

Recall that an action of a group $G$ on a space $X$ is
\emph{almost free} if all isotropy subgroups are finite.

\begin{proposition}\label{afree}
The group $T_\Gamma$ acts on $\mathcal Z_\Gamma$ almost freely.
Moreover, the isotropy subgroup of $\mb z\in\mathcal Z_\Gamma$ is
given by $\displaystyle L^*_{\mb z}/L^*$.
\end{proposition}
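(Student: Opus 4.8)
The plan is to compute the isotropy subgroup directly from the definition of the $T_\Gamma$-action. An element of $T_\Gamma$ is represented by $\varphi\in\R^{m-n}$ (with $\varphi$ and $\varphi'$ giving the same element iff $\varphi-\varphi'\in L^*$), acting on $\mb z=(z_1,\ldots,z_m)$ by multiplying the $k$-th coordinate by $e^{2\pi i\langle\gamma_k,\varphi\rangle}$. First I would observe that $\varphi$ fixes $\mb z$ if and only if $e^{2\pi i\langle\gamma_k,\varphi\rangle}z_k=z_k$ for all $k$, which is equivalent to $\langle\gamma_k,\varphi\rangle\in\Z$ for every $k$ with $z_k\ne0$, i.e.\ for every $k\notin I_{\mb z}$. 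By the definition of the dual lattice, this says precisely that $\varphi\in L^*_{I_{\mb z}}$, where $L_{I_{\mb z}}=\Z\langle\gamma_k\colon k\notin I_{\mb z}\rangle$ and $L^*_{I_{\mb z}}$ is its dual lattice (a priori a lattice of possibly smaller rank, but see below). Passing to $T_\Gamma=\R^{m-n}/L^*$, the isotropy subgroup of $\mb z$ is therefore the image of $L^*_{I_{\mb z}}$ in $\R^{m-n}/L^*$, that is, $L^*_{I_{\mb z}}/L^*$ (here one should note $L^*\subset L^*_{I_{\mb z}}$ since $L_{I_{\mb z}}\subset L$ reverses under duality).

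Next I would verify finiteness, which is where Lemma~\ref{fullrank} does the real work. By that lemma, $L_{I_{\mb z}}$ has full rank $m-n$ in $\R^{m-n}$; hence its dual $L^*_{I_{\mb z}}$ is also a full-rank lattice in $\R^{m-n}$, and it contains $L^*$ as a finite-index sublattice (the index being $[L:L_{I_{\mb z}}]<\infty$). Therefore $L^*_{I_{\mb z}}/L^*$ is a finite group, which shows that every isotropy subgroup is finite, i.e.\ the action is almost free. One small subtlety worth spelling out: without the full-rank conclusion of Lemma~\ref{fullrank}, $L^*_{I_{\mb z}}$ would be the dual of a lower-rank lattice and hence non-discrete (it would contain a linear subspace), so the isotropy group would be positive-dimensional; thus the geometric input of Lemma~\ref{fullrank} is exactly what rules this out.

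I do not anticipate a serious obstacle here; the statement is essentially a bookkeeping computation once one is careful about the identification $T_\Gamma\cong\R^{m-n}/L^*$ and the duality between the sublattice $L_{I_{\mb z}}\subset L$ and the overlattice $L^*\subset L^*_{I_{\mb z}}$. The one point that requires genuine care — and the only place an external result is invoked — is the appeal to Lemma~\ref{fullrank} to guarantee that $L^*_{I_{\mb z}}$ is discrete; I would make sure to state this dependence explicitly rather than treat finiteness as automatic.
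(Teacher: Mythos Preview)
Your argument is correct and follows essentially the same route as the paper's proof: identify the stabiliser condition as $\langle\gamma_k,\varphi\rangle\in\Z$ for $k\notin I_{\mb z}$, recognise this as $\varphi\in L^*_{I_{\mb z}}$, pass to the quotient $T_\Gamma=\R^{m-n}/L^*$, and invoke Lemma~\ref{fullrank} for finiteness. Your write-up is more expansive about the duality $L_{I_{\mb z}}\subset L \Rightarrow L^*\subset L^*_{I_{\mb z}}$ and about why full rank is essential, but the underlying proof is the same.
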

\begin{proof}
An element $(e^{2\pi
i\langle\gamma_1,\varphi\rangle},\ldots,e^{2\pi
i\langle\gamma_m,\varphi\rangle})\in T_\Gamma$ fixes given $\mb
z\in\mathcal Z_\Gamma$ if and only if $e^{2\pi
i\langle\gamma_k,\varphi\rangle}=1$ whenever $z_k\ne0$. This
condition is equivalent to $\langle\gamma_k,\varphi\rangle\in\Z$
whenever $z_k\ne0$, that is, $\varphi\in\displaystyle L^*_{\mb
z}$. Since $\varphi\in L^*$ maps to $1\in T_\Gamma$, the isotropy
subgroup of $\mb z$ is identified with $\displaystyle L^*_{\mb
z}/L^*$. This group is finite by Lemma~\ref{fullrank}.
\end{proof}

\section{Lagrangian immersions}\label{lagri}
Here we briefly review the construction of~\cite{miro04}, which
gives an $H$-minimal Lagrangian immersion in~$\C^m$ for every
intersection of quadrics $\mathcal R_\Gamma$ satisfying conditions
(a)--(c) of the previous section.

Let $M$ be a symplectic manifold of dimension $2n$ with symplectic
form~$\omega$. An immersion $i\colon N\looparrowright M$ of an
$n$-dimensional manifold $N$ is called \emph{Lagrangian} if
$i^*(\omega)=0$. If $i$ is an embedding, then its image is called
a \emph{Lagrangian submanifold} of~$M$. A vector field $\xi$ on
$M$ is \emph{Hamiltonian} if the 1-form $\omega(\:\cdot\:,\xi)$ is
exact.

Assume that a compatible Riemannian metric is chosen on~$M$. A
Lagrangian immersion $i\colon N\looparrowright M$ is called
\emph{Hamiltonian minimal} (\emph{$H$-minimal}) if the variations
of the volume of $i(N)$ along all Hamiltonian vector fields with
compact support are zero, that is,
\[
  \frac d{dt}\mathop{\mathrm{vol}}(i_t(N))\big|_{t=0}=0,
\]
where $i_0(N)=i(N)$, $i_t(N)$ is a deformation of $i(N)$ along a
Hamiltonian vector field, and $\mathop{\mathrm{vol}}(i_t(N))$ is
the volume of the deformed part of $i_t(N)$. An immersion is
\emph{minimal} if the variations of the volume of $i(N)$ along
\emph{all} vector fields are zero.

We endow $\C^m$ with the standard Hermitian metric $\sum_{k=1}^m
d\overline{z}_k\otimes dz_k$. Its real part is the standard
Riemannian metric, and the imaginary part is the standard
symplectic form $\frac i2\sum_{k=1}^m dz_k\wedge d\overline{z}_k$
on~$\C^m$.

Now, returning to the notation of the previous section, we
consider the map
\begin{align*}
  j\colon\mathcal R_\Gamma\times T_\Gamma &\longrightarrow \C^m,\\
  (\mb u,\varphi) &\mapsto \mb u\cdot\varphi=(u_1e^{2\pi
i\langle\gamma_1,\varphi\rangle},\ldots,u_me^{2\pi
i\langle\gamma_m,\varphi\rangle}).
\end{align*}
Note that $j(\mathcal R_\Gamma\times T_\Gamma)\subset\mathcal
Z_\Gamma$. We let $D_\Gamma$ act on $\mathcal R_\Gamma\times
T_\Gamma$ diagonally; this action is free, since it is free on the
second factor. The quotient,
\[
  N_\Gamma=\mathcal R_\Gamma\times_{D_\Gamma} T_\Gamma,
\]
is an $m$-dimensional manifold.

\begin{lemma}\label{immer}\

{\rm (1)} The map $j\colon\mathcal R_\Gamma\times T_\Gamma
\to\C^m$ induces an immersion $i_\Gamma\colon
N_\Gamma\looparrowright\C^m$.

{\rm (2)} The immersion $i_\Gamma$ is an embedding if and only if
$L_{\mb u}=L$ for every $\mb u\in\mathcal R_\Gamma$.
\end{lemma}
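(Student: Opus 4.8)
The plan is to analyze the map $j$ and its factorisation through the quotient $N_\Gamma$, showing that the immersion property follows from the nondegeneracy of the quadrics, while the embedding property is governed precisely by when $j$ identifies points.

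First I would verify part (1): that $j$ descends to $N_\Gamma$ and is an immersion there. The map $j$ is clearly $D_\Gamma$-invariant, since for $\delta\in D_\Gamma\subset T_\Gamma$ the point $(\mb u\cdot\delta, \varphi-\delta)$ has image $\mb u\cdot\delta\cdot\varphi\cdot\delta^{-1} = \mb u\cdot\varphi$ (the $T_\Gamma$-action on the $u$-coordinates is by multiplication by unit complex numbers, and here $\delta\in D_\Gamma$ acts on $\mb u\in\R^m$ by sign changes, which is exactly the restriction of the $T_\Gamma$-action to $D_\Gamma$); hence $i_\Gamma$ is well-defined on the quotient. To check it is an immersion, since $\dim N_\Gamma = m = \frac12\dim_{\R}\C^m$, it suffices to show $dj$ has rank $m$ at every point, or equivalently that the restriction of $j$ to $\mathcal R_\Gamma\times T_\Gamma$ is an immersion onto a Lagrangian (as stated this is the content of~\cite{miro04}, but I would give the rank computation directly). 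At a point $(\mb u,\varphi)$, the tangent space to $\mathcal R_\Gamma$ contributes the real variations of the nonzero coordinates $u_k$, $k\notin I_{\mb u}$, and the tangent space to $T_\Gamma$ contributes the imaginary (angular) variations via the vectors $\gamma_k$; the only possible degeneracy is along directions in $T_\Gamma$ fixing $\mb z = j(\mb u,\varphi)$, and by Proposition~\ref{afree} the isotropy subgroup $L^*_{I_{\mb u}}/L^*$ is finite, so the $T_\Gamma$-orbit through $\mb z$ is $(m-n)$-dimensional, giving full rank $n + (m-n) = m$.

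Next, for part (2), I would characterise when $i_\Gamma$ fails to be injective. Suppose $j(\mb u,\varphi) = j(\mb u',\varphi')$. Comparing absolute values coordinatewise gives $|u_k| = |u'_k|$ for all $k$, so in particular $I_{\mb u} = I_{\mb u'} =: I$, and $\mb u' = \mb u\cdot\varepsilon$ for some $\varepsilon\in(\Z/2)^m$ supported on $[m]\setminus I$. Comparing arguments on the nonzero coordinates gives, for each $k\notin I$, the relation $e^{2\pi i\langle\gamma_k,\varphi-\varphi'\rangle} = \varepsilon_k = \pm1$, i.e.\ $\langle\gamma_k,\varphi-\varphi'\rangle\in\frac12\Z$, so $\varphi-\varphi'\in\frac12 L_I^*$, where $L_I^* = \{\psi: \langle\gamma_k,\psi\rangle\in\Z\ \forall k\notin I\}$ is the dual of the sublattice $L_I$. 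Thus the two preimages differ by an element of $\frac12 L_I^*$ acting diagonally (shift $\varphi$ by $\psi$, change signs of $\mb u$ by $\varepsilon_k = e^{2\pi i\langle\gamma_k,\psi\rangle}$). Conversely every such element produces a coincidence. So $i_\Gamma$ is injective precisely when, for every $\mb u\in\mathcal R_\Gamma$, the image of $\frac12 L_{I_{\mb u}}^*$ in $N_\Gamma$ is trivial — that is, when $\frac12 L_{I_{\mb u}}^*$ acts on $\mathcal R_\Gamma\times T_\Gamma$ through the subgroup $D_\Gamma = \frac12 L^*/L^*$ already quotiented out, which amounts to $\frac12 L_{I_{\mb u}}^* \subseteq \frac12 L^* + L_{I_{\mb u}}^*$...

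Here the main obstacle — and the cleanest way to finish — is to translate this condition back into the stated one, $L_{I_{\mb u}} = L$ for all $\mb u$. I would argue as follows: the subgroup of $T_\Gamma = \R^{m-n}/L^*$ generated by $\frac12 L_I^*$ and $L^*$ is $\frac12 L_I^*/L^*$ (note $L^*\subseteq L_I^*$ since $L_I\subseteq L$), and modding further by $D_\Gamma = \frac12 L^*/L^*$ kills exactly the part coming from $\frac12 L^*$; the residual group acting on $N_\Gamma$ is therefore $\frac12 L_I^* / (\frac12 L^* + L_I^*) \cong (L_I^*/L^*) \big/ 2(L_I^*/L^*)$, a $2$-torsion group which vanishes if and only if $L_I^*/L^*$ is trivial, i.e.\ $L_I = L$. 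Wait — I should double-check the last isomorphism: $\frac12 L_I^*/(\frac12 L^*+L_I^*)$, and since $L^*\subseteq L_I^*$ the denominator is $\frac12 L^* + L_I^*$ and multiplication by $2$ gives an isomorphism $\frac12 L_I^*/(\frac12 L^*+L_I^*)\xrightarrow{\ \cdot 2\ } L_I^*/(L^*+2L_I^*) = L_I^*/L^*$ provided $2L_I^*\subseteq L^*$, which need not hold in general, so I would instead argue directly that $\frac12 L_I^* \equiv \frac12 L^* \pmod{L_I^*}$ forces every $\psi\in L_I^*$ to lie in $L^* + 2L_I^*$, hence $L_I^*/L^* = 2(L_I^*/L^*)$, hence by finiteness of $L_I^*/L^*$ (it is finite because both lattices have full rank by Lemma~\ref{fullrank}) this quotient is trivial, giving $L_I^* = L^*$ and therefore $L_I = L$. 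Conversely if $L_{I_{\mb u}} = L$ for all $\mb u$ then no nontrivial coincidences occur by the computation above, so $i_\Gamma$ is injective, and being an injective immersion of a compact manifold it is an embedding. The one point requiring care throughout is keeping straight the three nested lattices $L_I \subseteq L$ in $\R^{m-n}$ and the three nested dual lattices $L^* \subseteq L_I^*$, together with which quotient acts on $N_\Gamma$ versus on $\mathcal R_\Gamma\times T_\Gamma$; I expect that bookkeeping to be the only real difficulty.
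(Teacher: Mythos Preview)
Your treatment of part~(1) and of the ``if'' direction of part~(2) matches the paper's proof: both argue that coincidences $j(\mb u,\varphi)=j(\mb u',\varphi')$ force $\varphi-\varphi'\in\tfrac12 L^*_{I_{\mb u}}/L^*$, and when $L_{I_{\mb u}}=L$ this is exactly $D_\Gamma$. (The paper, incidentally, only writes out this direction of~(2).)

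Your attempt at the ``only if'' direction, however, contains a real error. You assert that $i_\Gamma$ is injective when $\tfrac12 L_I^*\subseteq\tfrac12 L^*+L_I^*$, and then claim that this forces $L_I^*/L^*=2(L_I^*/L^*)$, which ``by finiteness'' must be trivial. Both steps fail. First, a finite abelian group $G$ with $G=2G$ need not be zero --- it only has odd order; $G=\Z/3$ is a counterexample. Second, and more fundamentally, the extra summand $L_I^*$ on the right should not be there: an element $\psi\in L_I^*$ acts on the $T_\Gamma$-factor by the nontrivial translation $\varphi\mapsto\varphi-\psi$ (the action on $T_\Gamma$ is free), so it does \emph{not} give a coincidence in $N_\Gamma$ unless it already lies in~$\tfrac12 L^*$. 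Concretely, take $m-n=1$, $L=\Z$, $L_I=3\Z$: then $L_I^*=\tfrac13\Z$, and your condition $\tfrac16\Z\subseteq\tfrac12\Z+\tfrac13\Z=\tfrac16\Z$ holds, yet $\tfrac12 L_I^*/\tfrac12 L^*\cong\Z/3$ is nontrivial and $i_\Gamma$ genuinely fails to be injective.

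The fix is simpler than what you wrote. The group $\tfrac12 L_I^*/L^*$ acts freely on $\mathcal R_\Gamma\times T_\Gamma$ (freeness on the $T_\Gamma$-factor suffices), and its orbits are exactly the $j$-fibres; $D_\Gamma=\tfrac12 L^*/L^*$ is a subgroup. Hence $i_\Gamma$ is injective iff these two groups coincide, i.e.\ $\tfrac12 L_I^*=\tfrac12 L^*$, i.e.\ $L_I^*=L^*$, i.e.\ $L_I=L$. No further lattice gymnastics are needed.
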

\begin{proof}
Take $\mb u\in\mathcal R_\Gamma$, $\varphi\in T_\Gamma$ and $g\in
D_\Gamma$. We have $\mb u\cdot g\in\mathcal R_\Gamma$, and $j(\mb
u\cdot g,g\varphi)=\mb u\cdot g^2\varphi=\mb u\cdot\varphi=j(\mb
u,\varphi)$. Hence the map $j$ is constant on $D_\Gamma$-orbits,
and therefore it induces a map of the quotient $N_\Gamma=(\mathcal
R_\Gamma\times T_\Gamma)/D_\Gamma$, which we denote by~$i_\Gamma$.

Assume that $j(\mb u,\varphi)=j(\mb u',\varphi')$. Then $L_{\mb
u}=L_{\mb u'}$ and
\begin{equation}\label{uu'}
  u_ke^{2\pi i\langle\gamma_k,\varphi\rangle}=u'_ke^{2\pi
  i\langle\gamma_k,\varphi'\rangle}\quad
  \text{for }k=1,\ldots,m.
\end{equation}
Since both $u_k$ and $u'_k$ are real, this implies that $e^{2\pi
i\langle\gamma_k,\varphi-\varphi'\rangle}=\pm1$ whenever
$u_k\ne0$, or, equivalently,
$\varphi-\varphi'\in\frac12\displaystyle L^*_{\mb u}/L^*$. In
other words,~\eqref{uu'} implies that $\mb u'=\mb u\cdot g$ and
$\varphi'=g\varphi$ for some $g\in\frac12{\displaystyle L^*_{\mb
u}/L^*}$. The latter is a finite group by Lemma~\ref{afree}; hence
the preimage of any point of $\C^m$ under $j$ consists of a finite
number of points. If $L_{\mb u}=L$, then $\frac12{\displaystyle
L^*_{\mb u}/L^*}=\frac12\displaystyle L^*/L^*=D_\Gamma$; hence
$(\mb u,\varphi)$ and $(\mb u',\varphi')$ represent the same point
in~$N$. Statement~(2) follows; to prove~(1), it remains to observe
that we have $L_{\mb u}=L$ for generic $\mb u$ (with all
coordinates nonzero).
\end{proof}

\begin{theorem}[{\cite[Theorem~1]{miro04}}]\label{hmin}
The immersion $i_\Gamma\colon N_\Gamma\looparrowright\C^m$ is
$H$-minimal Lagrangian. Moreover, if $\sum_{k=1}^m\gamma_k=0$,
then $i_\Gamma$ is a minimal Lagrangian immersion.
\end{theorem}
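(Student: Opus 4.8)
The plan is to compute the Lagrangian angle of the immersion explicitly and read off both assertions from it. Since $N_\Gamma$ is the quotient of $\mathcal R_\Gamma\times T_\Gamma$ by the free action of the finite group $D_\Gamma$, and $j=i_\Gamma\circ p$ with $p$ the quotient (covering) map, it suffices to work with $j$ and descend the conclusions to $N_\Gamma$ at the end. Writing $z_k=u_ke^{2\pi i\langle\gamma_k,\varphi\rangle}$ one has $j^*dz_k=e^{2\pi i\langle\gamma_k,\varphi\rangle}\bigl(du_k+2\pi i\,u_k\langle\gamma_k,d\varphi\rangle\bigr)$. Pulling back the symplectic form $\omega=\frac i2\sum_k dz_k\wedge d\overline z_k$ gives $j^*\omega=2\pi\sum_{j=1}^{m-n}\bigl(\sum_{k=1}^m\gamma_{jk}u_k\,du_k\bigr)\wedge d\varphi_j$; differentiating the quadrics $\sum_k\gamma_{jk}u_k^2=c_j$ shows that each inner $1$-form vanishes on $T\mathcal R_\Gamma$, so $j^*\omega=0$, and together with $\dim N_\Gamma=n+(m-n)=m=\tfrac12\dim_\R\C^m$ this shows $i_\Gamma$ is a Lagrangian immersion.

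Next I would pull back the holomorphic volume form $\Omega=dz_1\wedge\dots\wedge dz_m$, obtaining $j^*\Omega=e^{2\pi i\langle\sum_k\gamma_k,\varphi\rangle}\,\eta$ with $\eta=\bigwedge_{k=1}^m\bigl(du_k+2\pi i\,u_k\langle\gamma_k,d\varphi\rangle\bigr)$. When $\eta$ is restricted to $T(\mathcal R_\Gamma\times T_\Gamma)$, only the terms in which exactly $m-n$ of the factors contribute their imaginary part survive (there are just $m-n$ independent differentials $d\varphi_j$, while $\dim\mathcal R_\Gamma=n$), so $\eta$ restricts to $i^{m-n}$ times a real top-form. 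Combined with the standard fact that a Lagrangian immersion into $\C^m$ satisfies $|i_\Gamma^*\Omega|=\operatorname{vol}$, this identifies the Lagrangian angle as $\beta=2\pi\langle\sum_k\gamma_k,\varphi\rangle$ up to an additive constant; what is actually needed below is the associated mean curvature $1$-form, the globally defined closed form $d\beta=2\pi\sum_{j=1}^{m-n}\bigl(\sum_{k=1}^m\gamma_{jk}\bigr)d\varphi_j$ (here one uses that on a Lagrangian submanifold of the Ricci-flat manifold $\C^m$ the form $\iota_H\omega$ is closed and equals the differential of the Lagrangian angle).

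The second assertion follows at once: if $\sum_k\gamma_k=0$ then $d\beta=0$, the mean curvature vanishes identically, and $i_\Gamma$ is minimal. For the first, recall the first-variation formula for Hamiltonian deformations (see \cite{Oh}): the first variation of the volume of $i_\Gamma(N_\Gamma)$ along the Hamiltonian vector field of a compactly supported function $f$ on $N_\Gamma$ is a fixed nonzero multiple of $\int_{N_\Gamma}f\,d^*(d\beta)\,\operatorname{vol}_{N_\Gamma}$, so H-minimality amounts to $d^*(d\beta)=0$, i.e.\ to $\Delta_g\beta=0$ for the induced metric $g$. From the formula for $j^*dz_k$ one gets $g=\sum_k (du_k)^2+4\pi^2\sum_k u_k^2\langle\gamma_k,d\varphi\rangle^2$, with no mixed terms; in product local coordinates $(t_1,\dots,t_n)$ on $\mathcal R_\Gamma$ and $\varphi_1,\dots,\varphi_{m-n}$ on $T_\Gamma$ this metric is block-diagonal, and --- crucially --- the entries of both blocks, hence also $\sqrt{\det g}$, are functions of the $t$-variables alone, because the coefficients $u_k$ are functions on $\mathcal R_\Gamma$. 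Since $\beta$ is linear in $\varphi$ only, for $a$ a $\varphi$-index the function $g^{ab}\partial_b\beta$ depends on $t$ alone, and for $a$ a $t$-index it vanishes; therefore $\Delta_g\beta=(\det g)^{-1/2}\partial_a\bigl((\det g)^{1/2}g^{ab}\partial_b\beta\bigr)=0$. This proves H-minimality, and everything descends to $N_\Gamma$ since $p$ is a Riemannian covering for the induced metrics and $d\beta$ is $D_\Gamma$-invariant.

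I expect the main obstacle to be the second step: organising the multilinear expansion of $\eta$ so that one sees the surviving terms all carry the common phase $i^{m-n}$, and being careful that on the possibly non-orientable $N_\Gamma$ the angle $\beta$ itself is only defined modulo $\pi$ --- so the object one propagates through the argument is the globally defined closed mean curvature $1$-form $d\beta=2\pi\langle\sum_k\gamma_k,d\varphi\rangle$, together with the precise first-variation identity linking it to Hamiltonian-minimality.
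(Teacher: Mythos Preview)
The paper does not supply its own proof of this theorem: it is quoted verbatim from \cite[Th.~1]{miro04} and used as a black box, so there is no argument in the present paper to compare yours against.

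That said, your sketch is correct and is essentially the proof from \cite{miro04}. The computation $j^*\omega=0$ is right, and your identification of the Lagrangian angle is justified most cleanly by the determinant observation you are implicitly using: writing $\alpha_k=du_k+2\pi i\,u_k\langle\gamma_k,d\varphi\rangle$ and evaluating $\eta=\alpha_1\wedge\dots\wedge\alpha_m$ on a split frame $(X_1,\dots,X_n,\partial_{\varphi_1},\dots,\partial_{\varphi_{m-n}})$ gives $\det\bigl[A\,\big|\,iB\bigr]=i^{m-n}\det\bigl[A\,\big|\,B\bigr]$ with $A,B$ real, since $\alpha_k(X_l)=du_k(X_l)\in\R$ and $\alpha_k(\partial_{\varphi_j})\in i\R$. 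This is a cleaner way to phrase the ``only terms with exactly $m-n$ imaginary factors survive'' step you flagged as the main obstacle. The harmonic-angle criterion for H-minimality is exactly Oh's \cite{Oh}, and your Laplacian computation is valid: the induced metric is block-diagonal in $(t,\varphi)$ with all coefficients functions of $t$ alone, so any function affine in $\varphi$ with constant coefficients is harmonic. Finally, the descent to $N_\Gamma$ is unproblematic because $D_\Gamma$ acts by isometries (sign changes on $\mathcal R_\Gamma$, translations on $T_\Gamma$), the $\varphi$-block entries $4\pi^2\sum_k u_k^2\gamma_{ik}\gamma_{jk}$ are invariant under those sign changes, and the mean-curvature $1$-form $d\beta=2\pi\langle\sum_k\gamma_k,d\varphi\rangle$ is manifestly translation-invariant. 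Your caution about $\beta$ being defined only modulo $\pi$ on a possibly non-orientable $N_\Gamma$ is well placed; working throughout with the globally defined closed form $d\beta$ (equivalently, with $\iota_H\omega$) is the right fix.
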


Minimal immersions corresponding to one quadric were considered
in~\cite{J}.

\section{Lagrangian embeddings and moment-angle manifolds}
We start by summarising the observations of the previous sections
in the following criterion for $N_\Gamma$ to be embeddable as a
Lagrangian submanifold in~$\C^m$.

\begin{theorem}\label{nembed}
The following conditions are equivalent:
\begin{itemize}
\item[(1)] $i_\Gamma\colon N_\Gamma\to\C^m$ is an embedding of an
$H$-minimal Lagrangian submanifold;

\item[(2)] $L_{\mb u}=L$ for every $\mb u\in\mathcal R_\Gamma$;

\item[(3)] $T_\Gamma$ acts on $\mathcal Z_\Gamma$ freely.
\end{itemize}
\end{theorem}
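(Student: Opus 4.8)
The plan is to prove the chain of equivalences $(1)\Leftrightarrow(2)\Leftrightarrow(3)$ by assembling the pieces already established in the excerpt. Most of the work has been done; this theorem is essentially a bookkeeping exercise tying together Lemma~\ref{immer}(2), Proposition~\ref{afree}, and Lemma~\ref{fullrank}.

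First, $(1)\Leftrightarrow(2)$ is immediate. Lemma~\ref{immer}(1) already tells us $i_\Gamma$ is always an immersion and Theorem~\ref{hmin} tells us it is always H-minimal Lagrangian, so condition~(1) adds only the requirement that $i_\Gamma$ be an \emph{embedding}. By Lemma~\ref{immer}(2), $i_\Gamma$ is an embedding if and only if $L_{I_{\mb u}}=L$ for every $\mb u\in\mathcal R_\Gamma$, which is exactly~(2). (One should double-check that Lemma~\ref{immer}(2) is stated as an ``if and only if''; it is, so nothing further is needed.)

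Next, $(2)\Leftrightarrow(3)$. By Proposition~\ref{afree}, the isotropy subgroup of $\mb z\in\mathcal Z_\Gamma$ under the $T_\Gamma$-action is $L^*_{I_{\mb z}}/L^*$, where $L^*_{I_{\mb z}}$ is the dual lattice of $L_{I_{\mb z}}$ inside $\R^{m-n}$ (this makes sense because $L_{I_{\mb z}}$ has full rank by Lemma~\ref{fullrank}). The action is free precisely when $L^*_{I_{\mb z}}/L^*$ is trivial for all $\mb z$, i.e.\ when $L^*_{I_{\mb z}}=L^*$ for all $\mb z$, which by duality of full-rank lattices is equivalent to $L_{I_{\mb z}}=L$ for all $\mb z\in\mathcal Z_\Gamma$. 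It remains to see that ranging over $\mb z\in\mathcal Z_\Gamma$ gives the same collection of conditions as ranging over $\mb u\in\mathcal R_\Gamma$: the subsets $I\subset[m]$ arising as $I_{\mb z}$ for some $\mb z\in\mathcal Z_\Gamma$ are exactly those arising as $I_{\mb u}$ for some $\mb u\in\mathcal R_\Gamma$, since the defining equations of $\mathcal Z_\Gamma$ and $\mathcal R_\Gamma$ involve only $|z_k|^2$ and $u_k^2$ respectively, and the substitution $u_k^2=|z_k|^2$ identifies the two solution sets at the level of coordinate-vanishing patterns (both equal $\{(t_1,\dots,t_m)\in\R^m_\ge:\sum_k\gamma_{jk}t_k=c_j\}$ under $t_k=u_k^2=|z_k|^2$). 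Hence $(2)$ and $(3)$ say the same thing.

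The only mild obstacle is the last identification in the previous paragraph — making sure that the index sets $I_{\mb u}$ and $I_{\mb z}$ sweep out the same family of subsets of $[m]$ — but this is routine: a subset $I$ occurs as some $I_{\mb u}$ iff the system $\sum_{k\notin I}\gamma_{jk}t_k=c_j$ has a solution with all $t_k>0$ for $k\notin I$, and the identical statement characterises the $I_{\mb z}$. I expect the proof to be about half a page, consisting mostly of citing the earlier results and spelling out the lattice-duality step.
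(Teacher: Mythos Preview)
Your proposal is correct and follows exactly the paper's approach: the paper's proof is a two-line citation of Lemma~\ref{immer} and Theorem~\ref{hmin} for $(1)\Leftrightarrow(2)$, and of Proposition~\ref{afree} for $(2)\Leftrightarrow(3)$. You have simply made explicit the lattice-duality step and the identification of the index sets $\{I_{\mb u}\}=\{I_{\mb z}\}$ that the paper leaves to the reader.
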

\begin{proof}
Equivalence (1)$\,\Leftrightarrow\,$(2) follows from
Lemma~\ref{immer} and Theorem~\ref{hmin}. Equivalence
(2)$\,\Leftrightarrow\,$(3) follows from Proposition~\ref{afree}.
\end{proof}

This result allows us to construct explicitly new families of
$H$-minimal Lagrangian submanifolds, once we have an effective
method for producing nondegenerate intersections of quadrics
$\mathcal R_\Gamma$ satisfying conditions~(2) or~(3) of
Theorem~\ref{nembed}. Toric topology provides such a method, which
we describe below following~\cite{bo-me06} and~\cite{pano10}.

The quotient of $\mathcal R_\Gamma$ by the action of $(\Z/2)^m$
(or the quotient of $\mathcal Z_\Gamma$ by the action of $\T^m$)
is identified with the set $P$ of nonnegative solutions of the
following system of $m-n$ linear equations:
\begin{equation}\label{linsys}
  \sum_{k=1}^m\gamma_ky_k=\mb c.
\end{equation}
This set can be described as a convex polyhedron obtained by
intersecting $m$ halfspaces in~$\R^n$:
\begin{equation}\label{ptope}
  P=\bigl\{\mb x\in\R^n\colon\langle\mb a_i,\mb
  x\rangle+b_i\ge0\quad\text{for }
  i=1,\ldots,m\bigr\},
\end{equation}
where $(b_1,\ldots,b_m)$ is any solution of~\eqref{linsys} and the
vectors $\mb a_1,\ldots,\mb a_m\in\R^n$ form the transpose of the
matrix formed by a basis of solutions of the homogeneous system
$\sum_{k=1}^m\gamma_ky_k=\mathbf 0$. We note that $P$ may be
unbounded; in fact $P$, is bounded if and only if $\mathcal
R_\Gamma$ is bounded (compact). Bounded polyhedra are known as
\emph{polytopes}.

\begin{proposition}\label{propcf}
The intersection of quadrics $\mathcal R_\Gamma$ is bounded if and
only it is linear equivalent to an intersection of the following
form:
\begin{equation}\label{canform}
\mathcal R_\Gamma= \left\{\begin{array}{ll}
  \mb u\in\R^m\colon&\gamma_{11}u_1^2+\cdots+\gamma_{1m}u_m^2=c_1,\\[1mm]
  &\gamma_{j1}u_1^2+\cdots+\gamma_{jm}u_m^2=0,\quad\text{for }
  2\le j\le m-n.
  \end{array}\right\},
\end{equation}
where $c_1>0$ and $\gamma_{1k}>0$ for all $k$.
\end{proposition}
\begin{proof}
The quotient of $\mathcal R_\Gamma$ by $(\Z/2)^m$ is the
intersection of the $n$-dimensional affine plane $L$ given
by~\eqref{linsys} with~$\R^m_\ge$. It is bounded if and only if
$L_0\cap\R_\ge^m=\{\mathbf0\}$, where $L_0$ is the $n$-plane
through~$\bf0$ parallel to~$L$. Choose a hyperplane $H_0$ through
$\bf0$ separating the convex sets $L_0$ and~$\R_\ge^m$, that is,
$L_0\subset H_0$ and $H_0\cap\R_\ge^m=\{\mathbf0\}$. Let $H$ be
the affine hyperplane parallel to $H_0$ and containing~$L$. Since
$L\subset H$, we may take the equation defining $H$ as the first
equation in~\eqref{linsys}. The conditions on $H_0$ imply that
$H\cap\R_\ge^m$ is nonempty and bounded, that is, $c_1>0$ and
$\gamma_{1k}>0$ for all~$k$. Now, subtracting the first equation
from the other equations in~\eqref{linsys} with appropriate
coefficients, we achieve that $c_j=0$ for $2\le j\le m-n$.
\end{proof}

We refer to~\eqref{ptope} as a \emph{presentation} of the
polyhedron~$P$ by inequalities. These inequalities contain
slightly more information than the geometric set~$P$, for the
following reason. It may happen that some of the inequalities
$\langle\mb a_i,\mb x\rangle+b_i\ge0$ can be removed from the
presentation without changing the set~$P$; we refer to such
inequalities as \emph{redundant}. A presentation without redundant
inequalities is \emph{irredundant}. Every polyhedron has a unique
irredundant presentation; however, in order to cover all
nondegenerate intersections of quadrics $\mathcal R_\Gamma$, we
need to consider redundant presentations as well.

A presentation~\eqref{ptope} is said to be \emph{generic} if $P$
is $n$-dimensional, has at least one vertex, and the hyperplanes
defined by the equations $\langle\mb a_i,\mb x\rangle+b_i=0$ are
in general position at every point of~$P$. If $P$ is a polytope,
then the existence of a generic presentation implies that $P$ is
\emph{simple}, that is, exactly $n$ facets meet at every vertex
of~$P$. A generic presentation may contain redundant inequalities,
but, for every such inequality, the intersection of the
corresponding hyperplane with $P$ is empty (that is, the
inequality is strict for every $\mb x\in P$).

\begin{theorem}[{see \cite[Lemma~0.12]{bo-me06} or~\cite[Theorem~4.3]{pa-us10}}]
The intersections of quadrics \eqref{rquad} and~\eqref{zquad} are
nondegenerate and nonempty if and only if
presentation~\eqref{ptope} is generic.
\end{theorem}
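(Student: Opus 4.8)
The plan is to prove the equivalence by tracking how the three defining conditions on $\Gamma$ and $\mb c$ (nondegeneracy, nonemptiness, genericity) correspond to each other through the dictionary relating $\mathcal R_\Gamma$, the linear system~\eqref{linsys}, and the polyhedron~\eqref{ptope}. First I would recall that a point $\mb u\in\mathcal R_\Gamma$ with zero-set $I_{\mb u}=I$ corresponds, after squaring coordinates, to a nonnegative solution $\mb y$ of~\eqref{linsys} with $y_i=0$ for $i\in I$, which in turn corresponds to a point $\mb x\in P$ lying on exactly the facet-hyperplanes indexed by~$I$ (here $\langle\mb a_i,\mb x\rangle+b_i=y_i$). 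Thus the faces of $P$ are indexed precisely by those $I\subset[m]$ for which $\mathcal R_\Gamma$ has a point with $I_{\mb u}=I$, and the codimension of such a face equals $\mathop{\mathrm{rk}}\{\mb a_i\colon i\in I\}$, while by Proposition~\ref{rzsmooth} the rank of the gradients of the quadrics at $\mb u$ is $\mathop{\mathrm{rk}}\{\gamma_k\colon k\notin I\}$; since the $\gamma_k$ span the solution space of the homogeneous system transpose to the $\mb a_i$, these two ranks are complementary, summing to~$m$.

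Next I would use this correspondence to match the two sides. Condition~(a) of Proposition~\ref{rzsmooth}, $\mb c\in\sigma\langle\gamma_1,\ldots,\gamma_m\rangle$, is exactly the statement that~\eqref{linsys} has a nonnegative solution, i.e.\ $P\ne\varnothing$; combined with the full-rank of $\{\mb a_i\}$ this gives that $P$ is $n$-dimensional. To see that $P$ has a vertex under the nondegeneracy hypotheses, one observes that nonemptiness plus condition~(b) force $\mathcal R_\Gamma$ to contain a point $\mb u$ with $|I_{\mb u}|=n$ (a point where $m-n$ linearly independent $\gamma_k$ generate a cone containing $\mb c$, which exists by Carath\'eodory applied to~(a)); the corresponding $\mb x\in P$ lies on $n$ facet-hyperplanes in general position, hence is a vertex. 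For the general-position condition itself: a presentation fails to be generic exactly when some point $\mb x\in P$ lies on facet-hyperplanes $\{i\in I\}$ with $\mathop{\mathrm{rk}}\{\mb a_i\colon i\in I\}<|I|$; translating through the rank-complementarity, this says there is $\mb u\in\mathcal R_\Gamma$ with $\mathop{\mathrm{rk}}\{\gamma_k\colon k\notin I_{\mb u}\}<m-n$, which is precisely the failure of nondegeneracy of $\mathcal R_\Gamma$ at~$\mb u$. Conversely, non-genericity of this shape is seen to be equivalent to the failure of~(b): if~(b) fails then $\mb c$ lies in a cone on $\le m-n-1$ of the $\gamma_k$, producing (via Carath\'eodory and nonemptiness) a point of $\mathcal R_\Gamma$ with $\ge n+1$ zero coordinates, hence a point of $P$ on $\ge n+1$ hyperplanes which therefore cannot be in general position there.

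So the skeleton is: (i) establish the face-dictionary and the rank-complementarity $\mathop{\mathrm{rk}}\{\mb a_i\colon i\in I\}+\mathop{\mathrm{rk}}\{\gamma_k\colon k\notin I\}=m$ for index sets $I$ arising from points of $\mathcal R_\Gamma$; (ii) show nonemptiness of $\mathcal R_\Gamma\iff P\ne\varnothing$, and that under the standing conditions $P$ acquires a vertex; (iii) show that general position of the hyperplanes at every point of $P$ is equivalent to the gradients of the quadrics being linearly independent at every point of $\mathcal R_\Gamma$, i.e.\ to nondegeneracy. I would then just invoke the already-proved Proposition~\ref{rzsmooth} and the preceding proposition on bounded $\mathcal R_\Gamma$ to package everything, and note that~\eqref{zquad} is handled identically since $\mathcal Z_\Gamma/\T^m$ is the same set of nonnegative solutions.

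The main obstacle I anticipate is the bookkeeping in step~(iii), specifically the claim that a \emph{strict} (redundant) inequality in a generic presentation is harmless: one must verify that ``general position at every point of $P$'' tolerates redundant facets (whose hyperplanes miss $P$ entirely) and that such redundancy corresponds on the quadric side exactly to a $\gamma_k$ for which $\mathcal R_\Gamma$ has no point with $u_k=0$ — so that the rank-complementarity argument, which a priori only applies to $I$'s realized by points of $\mathcal R_\Gamma$, still suffices to conclude genericity globally. Sorting out this edge case cleanly — rather than the Carath\'eodory arguments, which are routine once the dictionary is in place — is where the proof needs care.
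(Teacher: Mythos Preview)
The paper does not actually prove this theorem: it is stated with a citation to \cite[Lemma~0.12]{bo-me06} and \cite[Th.~4.3]{pa-us10} and no proof is given. So there is nothing to compare against directly; your outline is in fact the standard approach used in those references, namely to pass through the correspondence between points of $\mathcal R_\Gamma$, nonnegative solutions of~\eqref{linsys}, and points of~$P$, and to match the rank condition on the gradients with the general-position condition on the bounding hyperplanes.

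There is, however, a genuine error in your key linear-algebraic step. You claim that
\[
  \mathop{\mathrm{rk}}\{\mb a_i\colon i\in I\}+\mathop{\mathrm{rk}}\{\gamma_k\colon k\notin I\}=m,
\]
but this is false in general (take $I=\varnothing$: the left side is $0+(m-n)=m-n$). The correct identity, obtained from the short exact sequence $0\to\R^n\xrightarrow{A_P}\R^m\xrightarrow{\Gamma}\R^{m-n}\to 0$ by computing $\ker(\Gamma|_{\R^{I^c}})=\mathrm{im}(A_P)\cap\R^{I^c}=A_P\bigl(\ker(\pi_I\circ A_P)\bigr)$, is
\[
  \mathop{\mathrm{rk}}\{\gamma_k\colon k\notin I\}
  =(m-n)-\bigl(|I|-\mathop{\mathrm{rk}}\{\mb a_i\colon i\in I\}\bigr).
\]
This is exactly what you need: it says that $\{\gamma_k\colon k\notin I\}$ spans $\R^{m-n}$ (nondegeneracy at $\mb u$) if and only if $\{\mb a_i\colon i\in I\}$ is linearly independent (general position at the corresponding $\mb x\in P$). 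With your incorrect formula the implication you draw in step~(iii) --- from $\mathop{\mathrm{rk}}\{\mb a_i\colon i\in I\}<|I|$ to $\mathop{\mathrm{rk}}\{\gamma_k\colon k\notin I\}<m-n$ --- does not follow. Once you replace the sum-to-$m$ claim by the identity above, the rest of your outline (including the Carath\'eodory argument for the existence of a vertex and your handling of redundant inequalities) is sound.
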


Conversely, given a generic presentation~\eqref{ptope} of a
polyhedron~$P$, we can reconstruct the intersections of quadrics
$\mathcal R_\Gamma$ and $\mathcal Z_\Gamma$ as follows.

\begin{construction}[moment-angle manifold~{\cite[\S6.1]{bu-pa02}}]\label{dist}
Consider the affine map
\[
  i_P\colon \R^n\to\R^m,\quad \mb x\mapsto
  \bigl(\langle\mb a_1,\mb x\rangle+b_1,\ldots,
  \langle\mb a_m,\mb x\rangle+b_m\bigr).
\]
It is a monomorphism onto a certain $n$-dimensional plane in
$\R^m$ (because $P$ has a vertex), and $i_P(P)$ is the
intersection of this plane with~$\R_\ge^m$.

We define the space $\mathcal Z_P$ from the commutative diagram
\begin{equation}\label{cdiz}
\begin{CD}
  \mathcal Z_P @>i_Z>>\C^m\\
  @VVV\hspace{-0.2em} @VV\mu V @.\\
  P @>i_P>> \R^m_\ge
\end{CD}
\end{equation}
where $\mu(z_1,\ldots,z_m)=(|z_1|^2,\ldots,|z_m|^2)$. This map may
be thought of as the quotient map for the coordinatewise action of
the torus $\T^m$ on~$\C^m$. Therefore, $\T^m$ acts on $\zp$ with
quotient~$P$, and $i_Z$ is a $\T^m$-equivariant embedding.

If~\eqref{ptope} is generic, then $\mathcal Z_P$ is a smooth
manifold of dimension $m+n$, known as the \emph{(polyhedral)
moment-angle manifold} corresponding to~$P$.

Now we can write the $n$-dimensional plane $i_P(\R^n)$ by $m-n$
linear equations in~$\R^m$ as in~\eqref{linsys}. Replacing each
$y_k$ by $|z_k|^2$, we obtain a presentation of the moment-angle
manifold $\zp$ as an intersection of quadrics~\eqref{zquad}.

If we replace $\C^m$ by $\R^m$ in~\eqref{cdiz}, then we obtain the
\emph{real moment-angle manifold}~$\mathcal R_P$. It can be
written as an intersection of quadrics~\eqref{rquad}.
\end{construction}

It is clear from the above constructions that
$\gamma_1,\ldots,\gamma_m$ generate a lattice $L$ in $\R^{m-n}$ if
and only if the vectors $\mb a_1,\ldots,\mb a_m$ in~\eqref{ptope}
generate a lattice $\Lambda$ in~$\R^n$. The corresponding
polyhedra~$P$ are known as \emph{rational}. If $P$ is rational,
then there is a map of lattices
\begin{equation}\label{aplatt}
  A_P\colon\Lambda^*\to\Z^m,\quad
  \mb x\mapsto\bigl(\langle\mb a_1,\mb x\rangle,\ldots,
  \langle\mb a_m,\mb x\rangle\bigr).
\end{equation}
Its conjugate gives rise to a map of tori
$\R^m/\Z^m\to\R^n/\Lambda$, whose kernel we denote by~$T_P$. It
becomes $T_\Gamma$ under the identification of $\zp$ with
$\mathcal Z_\Gamma$. The group $D_P\cong(\Z/2)^{m-n}$ is also
defined. Using $\mathcal R_P$, $T_P$ and $D_P$, we can define the
$m$-dimensional manifold $N_P$ as described in
Section~\ref{lagri}.

The manifolds $\mathcal R_P,\mathcal Z_P,N_P$ therefore represent
the same geometric objects as $\mathcal R_\Gamma,\mathcal
Z_\Gamma,N_\Gamma$, although a different initial data is used in
their definition. From now on, we shall use subscripts $P$ or
$\Gamma$ in the notation of these manifolds only when it is
necessary to emphasise the polyhedral or quadrics origin. In other
cases we shall use the simplified notation $\mathcal R,\mathcal
Z,N$.

We can finally restate the embedding conditions of
Theorem~\ref{nembed} in terms of~$P$. A polyhedron~\eqref{ptope}
is called \emph{Delzant} if it is rational and for every $\mb x\in
P$ the vectors $\mb a_{j_1},\ldots,\mb a_{j_k}$ constitute a part
of a basis of $\Lambda=\Z\langle\mb a_1,\ldots,\mb a_m\rangle$
whenever $\langle\mb a_{j_l},\mb x\rangle+b_{j_l}=0$ for $1\le
l\le k$. (It is enough to verify this condition for vertices $\mb
x\in P$ only, in which case the corresponding $n$-tuple $\mb
a_{j_1},\ldots,\mb a_{j_n}$ must constitute a basis of~$\Lambda$.)
The name refers to a construction~\cite{delz88} of Hamiltonian
toric manifolds.

\begin{theorem}\label{delz}
The map $N_P=\mathcal R_P\times_{D_P}T_P\to\C^m$ is an embedding
if and only if $P$ is a Delzant polyhedron.
\end{theorem}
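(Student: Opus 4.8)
The plan is to reduce Theorem~\ref{delz} to Theorem~\ref{nembed} by translating the relevant condition on the $\gamma_i$'s into the dual condition on the $\mb a_i$'s. Recall that by Theorem~\ref{nembed}, the map $N_P\to\C^m$ is an embedding if and only if $L_{I_{\mb u}}=L$ for every $\mb u\in\mathcal R_P$; here $L_I=\Z\langle\gamma_i\colon i\notin I\rangle$ and $L=\Z\langle\gamma_1,\ldots,\gamma_m\rangle$. Under the duality built into Construction~\ref{dist}, the matrix $\Gamma$ of row vectors $\gamma_k\in\R^{m-n}$ is (up to linear equivalence) a matrix whose kernel, acting on $\R^m$, is the column span of $A_P$; equivalently, the $\gamma_k$ are the coordinates of a basis of relations among the $\mb a_i$. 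So the first step is to make this duality precise: I would show that, for a fixed subset $I\subset[m]$, the sublattice $L_{I}=\Z\langle\gamma_i\colon i\notin I\rangle$ equals $L=\Z^{m-n}$ if and only if the vectors $\{\mb a_i\colon i\notin I\}$ contain a $\Z$-basis of $\Lambda$, i.e. can be completed to a basis of $\Lambda$. This is a standard fact about a short exact sequence of lattices $0\to\Lambda^*\xrightarrow{A_P}\Z^m\xrightarrow{\Gamma}\Z^{m-n}\to 0$ restricted to coordinate subsets: one checks it using Smith normal form, or by observing that $L_I$ has full rank (Lemma~\ref{fullrank}) and computing the index $[L:L_I]$ as the gcd of the maximal minors of the relevant submatrix, matching it with the analogous index $[\Lambda:\Lambda\cap\langle\mb a_i\colon i\notin I\rangle^{\text{sat}}]$ for the $\mb a_i$.

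The second step is to connect the index sets $I_{\mb u}$ arising from points $\mb u\in\mathcal R_P$ with the faces of $P$. By Construction~\ref{dist}, under the quotient map $\mathcal R_P\to P$ a point $\mb u$ with zero-set $I_{\mb u}=I$ maps to a point $\mb x\in P$ lying exactly on the facets $F_i=\{\langle\mb a_i,\mb x\rangle+b_i=0\}$ for $i\in I$ (the nonzero coordinates $u_k$ with $k\notin I$ correspond to strict inequalities). Conversely every face of $P$, i.e. every collection $I$ of facets meeting in $P$, is realised by some $\mb u\in\mathcal R_P$. So the condition ``$L_{I_{\mb u}}=L$ for all $\mb u\in\mathcal R_P$'' is equivalent to: for every $\mb x\in P$, writing $I=\{i\colon\langle\mb a_i,\mb x\rangle+b_i=0\}$, we have $L_I=L$. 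Combining with the first step, this reads: for every $\mb x\in P$, the vectors $\{\mb a_i\colon i\notin I\}$ contain a basis of $\Lambda$.

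The third step is purely combinatorial: I would check that the condition just obtained is equivalent to the Delzant condition as stated, namely that $\{\mb a_i\colon i\in I\}$ is part of a basis of $\Lambda$ for every $\mb x\in P$ (with $I$ as above). The point is that for a generic presentation, at a point $\mb x$ lying on exactly the facets indexed by $I$, the vectors $\{\mb a_i\colon i\in I\}$ are linearly independent (general position), so saying ``$\{\mb a_i\colon i\in I\}$ extends to a basis of $\Lambda$'' is the same as saying ``$\{\mb a_i\colon i\notin I\}$ contains a basis of $\Lambda$'': both are equivalent to the quotient $\Lambda/\Z\langle\mb a_i\colon i\notin I\rangle$ being free, equivalently to $\det$-$1$ behaviour of the relevant minors. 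One also uses the parenthetical remark that it suffices to test vertices, since every face contains a vertex (as $P$ has a vertex and a generic presentation) and the condition for a face follows from that for any of its vertices.

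The main obstacle I expect is the first step: pinning down the exact lattice-level duality between the $\gamma_k$ and the $\mb a_i$ and proving the clean ``$L_I=L \iff \{\mb a_i\colon i\notin I\}$ contains a basis of $\Lambda$'' statement, keeping careful track of which lattice is which ($L$, $L^*$, $\Lambda$, $\Lambda^*$, $\Z^m$) and of the fact that after a linear equivalence we arranged $L=\Z^{m-n}$. Once that dictionary is set up correctly, steps two and three are routine bookkeeping with faces of $P$ and the definition of genericity. I would therefore devote most of the write-up to making the exact sequence $0\to\Lambda^*\xrightarrow{A_P}\Z^m\to L\to0$ and its coordinate-subset restrictions explicit, and then the proof of Theorem~\ref{delz} becomes a short chain of equivalences ending at Theorem~\ref{nembed}.
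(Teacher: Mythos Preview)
Your overall strategy---reduce to Theorem~\ref{nembed} via the lattice duality encoded in the short exact sequence $0\to\Lambda^*\xrightarrow{A_P}\Z^m\xrightarrow{\Gamma}L\to0$---is exactly the paper's approach. The paper packages this as a cross-shaped diagram with the coordinate sequence $0\to\Z^{m-k}\xrightarrow{\iota}\Z^m\xrightarrow{\kappa}\Z^k\to0$ (where $k=|I_{\mb u}|$) and observes that $\Gamma\iota$ is onto (i.e.\ $L_{I_{\mb u}}=L$) if and only if $\kappa A_P$ is onto (i.e.\ the Delzant condition at~$\mb x$); this is a one-line diagram chase, since both are equivalent to $\iota(\Z^{m-k})+A_P(\Lambda^*)=\Z^m$.

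However, your intermediate translation in step~1, namely ``$L_I=L\iff\{\mb a_i:i\notin I\}$ contains a basis of~$\Lambda$'', is wrong, and so is the claimed equivalence in step~3. Take the standard Delzant simplex $\Delta^n$ ($n\ge2$) with $\mb a_i=e_i$ for $1\le i\le n$ and $\mb a_{n+1}=-(e_1+\cdots+e_n)$. At the vertex $\mb x=0$ one has $I=\{1,\ldots,n\}$, so $\{\mb a_i:i\notin I\}=\{\mb a_{n+1}\}$ is a single vector and certainly does not contain a basis of $\Lambda\cong\Z^n$; yet $P$ is Delzant and $L_I=L$ (all $\gamma_k$ coincide, cf.\ Example~\ref{1quad}). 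Conversely, with $n=2$, $\mb a_1=(2,0)$, $\mb a_2=(0,1)$, $\mb a_3=(1,1)$ and $I=\{1\}$, the set $\{\mb a_2,\mb a_3\}$ \emph{is} a basis of $\Lambda=\Z^2$, but $L_I=\Z\langle\gamma_2,\gamma_3\rangle=2\Z\ne\Z=L$. So neither implication in your step~1 holds, and the assertion in step~3 that both conditions are ``equivalent to $\Lambda/\Z\langle\mb a_i:i\notin I\rangle$ being free'' is likewise false (for the simplex that quotient has rank $n-1>0$). What the exact sequence actually gives is $L_I=L\iff\kappa A_P$ onto $\iff\{\mb a_i:i\in I\}$ extends to a basis of~$\Lambda$, which is the Delzant condition on the nose. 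If you replace your step~1 by that statement, step~3 becomes vacuous and your proof collapses to the paper's.
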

\begin{proof}
Take $\mb u\in\mathcal R_P$. It projects onto $\mb x\in P$, where
$\langle\mb a_i,\mb x\rangle+b_i=u_i^2$ for $1\le i\le m$. Assume
that exactly $k$ of these numbers vanish. Let
$\iota\colon\Z^{m-k}\rightarrow\Z^m$ be the inclusion of the
coordinate sublattice corresponding to the nonzero~$u_i$, and let
$\kappa\colon\Z^m\rightarrow\Z^k$ be the quotient projection.
Consider the diagram
\[
\begin{CD}
  @.@.\begin{array}{c}0\\ \raisebox{2pt}{$\downarrow$}\\
  \Lambda^*\end{array}@.@.\\
  @.@.@VVA_PV@.@.\\
  0@>>>\Z^{m-k}@>\iota>>\Z^m@>\kappa>>\Z^k @>>>0\\
  @.@.@VV \Gamma V@.@.\\
  @.@.\begin{array}{c}L\\ \downarrow\\ 0\end{array}@.@.\\
\end{CD}
\]
in which the vertical and horizontal sequences are exact, the map
$A_P$ is given by~\eqref{aplatt}, and $\Gamma$ takes the $k$th
basis vector of $\Z^m$ to $\gamma_k$. Then the Delzant condition
is equivalent to that the composition $\kappa\cdot A_P$ is
surjective, while the second condition of Theorem~\ref{nembed} is
that $\Gamma\cdot\iota$ is surjective. A simple diagram chase (see
also~\cite[Theorem~I.2]{pano10}) shows that these two conditions
are equivalent.
\end{proof}

\begin{remark}
When the polytope $P$ is Delzant, the moment-angle manifold $\zp$
given by intersection of quadrics~\eqref{zquad} coincides with the
level set $\mu_P^{-1}(\mb c)$ for the moment map
$\mu_P\colon\C^m\to\R^{m-n}$ used in the construction of the
Hamiltionian toric manifold $M_P=\mu^{-1}_P(\mb c)/T_P$ via
symplectic reduction (see, e.g.~\cite[\S8.2]{bu-pa02}). By using
this observation, the $H$-minimality result of Theorem~\ref{hmin}
in the case of Delzant polytope~$P$ can also be deduced from a
result of Dong~\cite[Corollary~2.7]{dong07}.
\end{remark}

Toric topology provides large families of explicitly constructed
Delzant polytopes. Basic examples include simplices and cubes in
all dimensions. It is easy to see that the Delzant condition is
preserved under several operations on polytopes, such as taking
products or cutting vertices or faces by well-chosen hyperplanes.
This is sufficient to show that many important families of
polytopes, such as \emph{associahedra} (Stasheff polytopes),
\emph{permutahedra}, and general \emph{nestohedra}, admit Delzant
realisations (see~\cite{post09} and~\cite{buch08}).

\section{Topology of Lagrangian submanifolds~$N$}
In the previous section we gave a construction of an $H$-minimal
Lagrangian submanifold in $\C^m$ from any Delzant polytope~$P$.
The moment-angle manifolds $\zp$ and $\mathcal R_P$, appearing as
intermediate objects in this construction, are known to be very
complicated topologically, see~\cite{bu-pa02} and~\cite{pano10}.
Thus, there is no hope for a reasonable topological classification
of Lagrangian submanifolds obtained by our construction. However,
in some cases the topology of $N$ can be described quite
explicitly, providing new examples of $H$-minimal Lagrangian
submanifolds.

We start by reviewing three simple properties linking the
topological structure of the Lagrangian submanifold $N$ to that of
the manifolds $\mathcal Z$ and~$\mathcal R$.

\begin{proposition}\label{nprop}\
\begin{itemize}
\item[(1)] The immersion of $N$ in $\C^m$ factors as
$N\looparrowright \mathcal Z\hookrightarrow\C^m$;
\item[(2)] $N$ is the total space of a bundle over the torus
$T^{m-n}$ with fibre $\mathcal R$;
\item[(3)] if $N\to\C^m$ is an embedding, then $N$ is the total space of a principal
$T^{m-n}$-bundle over the $n$-dimensional manifold $\mathcal
R/D_P$.
\end{itemize}
\end{proposition}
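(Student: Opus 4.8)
The plan is to prove the three parts in order, each following quickly from the constructions already set up.

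For part (a), I would recall that $N = N_\Gamma = \mathcal R_\Gamma \times_{D_\Gamma} T_\Gamma$ and that the immersion $i_\Gamma$ is induced from the map $j\colon\mathcal R_\Gamma\times T_\Gamma\to\C^m$, $(\mb u,\varphi)\mapsto\mb u\cdot\varphi$. The key observation — already noted right after the definition of $j$ in Section~\ref{lagri} — is that $j(\mathcal R_\Gamma\times T_\Gamma)\subset\mathcal Z_\Gamma$: indeed if $\sum_k\gamma_{jk}u_k^2=c_j$ then $\sum_k\gamma_{jk}|u_ke^{2\pi i\langle\gamma_k,\varphi\rangle}|^2=\sum_k\gamma_{jk}u_k^2=c_j$. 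Thus $j$ factors as $\mathcal R_\Gamma\times T_\Gamma\to\mathcal Z_\Gamma\hookrightarrow\C^m$, and since the $D_\Gamma$-action is trivial on the image (being constant on $D_\Gamma$-orbits, as in Lemma~\ref{immer}), the induced map $i_\Gamma$ factors through $\mathcal Z_\Gamma$ as claimed.

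For part (b), I would use that $T_\Gamma\cong T^{m-n}$ acts freely on the second factor of $\mathcal R_\Gamma\times T_\Gamma$, so $N = \mathcal R_\Gamma\times_{D_\Gamma}T_\Gamma$ carries a residual free action of $T_\Gamma/D_\Gamma\cong T^{m-n}$ (here $D_\Gamma = \frac12 L^*/L^*\subset T_\Gamma$ is a subgroup and the quotient is again an $(m-n)$-torus). The projection $\mathcal R_\Gamma\times T_\Gamma\to T_\Gamma$ is $D_\Gamma$-equivariant, hence descends to a map $N\to T_\Gamma/D_\Gamma\cong T^{m-n}$ whose fibre over the identity coset is $\mathcal R_\Gamma\times\{D_\Gamma\cdot 1\}\cong\mathcal R_\Gamma$; local triviality follows from that of the principal bundle $T_\Gamma\to T_\Gamma/D_\Gamma$. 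This exhibits $N$ as a fibre bundle over $T^{m-n}$ with fibre $\mathcal R=\mathcal R_\Gamma$.

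For part (c), under the hypothesis that $N\to\C^m$ is an embedding, Theorem~\ref{nembed} gives $L_{I_{\mb u}}=L$ for every $\mb u\in\mathcal R$, so (by the argument in the proof of Lemma~\ref{immer}) the only elements of $T_\Gamma$ that can produce coincidences $j(\mb u,\varphi)=j(\mb u\cdot g,g\varphi)$ lie in $D_\Gamma$; more to the point, $D_\Gamma$ then acts \emph{freely} on $\mathcal R$ itself, not merely on $\mathcal R\times T_\Gamma$. I would project $N=\mathcal R\times_{D_\Gamma}T_\Gamma\to\mathcal R/D_\Gamma$ via $[(\mb u,\varphi)]\mapsto[\mb u]$; this is well defined, and the fibre over $[\mb u]$ is $(\{g\cdot\mb u : g\in D_\Gamma\}\times T_\Gamma)/D_\Gamma\cong T_\Gamma$, with the $T_\Gamma$-action by translation on the second coordinate making it a principal $T_\Gamma\cong T^{m-n}$-bundle. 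I expect the only point needing care — the main (mild) obstacle — is verifying that $D_\Gamma$ acts freely on $\mathcal R$ under the embedding hypothesis: one checks that a fixed point $\mb u$ of $g=\frac12\varphi_0\in D_\Gamma$ would force $e^{2\pi i\langle\gamma_k,\varphi_0\rangle}=1$, i.e. $\varphi_0\in L^*_{I_{\mb u}}$, but $L_{I_{\mb u}}=L$ implies $L^*_{I_{\mb u}}=L^*$, so $g$ is the identity in $D_\Gamma=\frac12 L^*/L^*$. Granting this, $\mathcal R/D_P$ is a free quotient of the smooth $n$-manifold $\mathcal R$ by a finite group, hence itself a smooth $n$-manifold, and the bundle structure is immediate.
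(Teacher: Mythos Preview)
Your approach matches the paper's: (a) via the containment $j(\mathcal R\times T_\Gamma)\subset\mathcal Z$, (b) via projection onto the second factor $T_\Gamma/D_\Gamma\cong T^{m-n}$, and (c) via projection onto the first factor $\mathcal R/D_\Gamma$ after checking that $D_\Gamma$ acts freely on~$\mathcal R$. Two small slips are worth correcting.

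In~(b), the aside about a ``residual free action of $T_\Gamma/D_\Gamma$ on $N$'' is false: the $T_\Gamma$-action on the second factor does descend to~$N$, but an element $g\in D_\Gamma$ sends $[(\mb u,\varphi)]$ to $[(\mb u,g\varphi)]=[(\mb u\cdot g^{-1},\varphi)]$, which is $[(\mb u,\varphi)]$ only when $g$ fixes~$\mb u$. Fortunately you never use this claim; your actual argument (the projection and its fibre) is correct and is exactly what the paper does.

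In~(c), your freeness check has a factor-of-two error. If you write $g\in D_\Gamma$ directly as $\varphi\in\frac12L^*/L^*$ (rather than as $\frac12\varphi_0$ with $\varphi_0\in L^*$), then a fixed point $\mb u$ forces $e^{2\pi i\langle\gamma_k,\varphi\rangle}=1$ for $k\notin I_{\mb u}$, i.e.\ $\varphi\in L^*_{I_{\mb u}}=L^*$, hence $g=0$. As you wrote it, the conclusion ``$\varphi_0\in L^*_{I_{\mb u}}$'' is vacuous, since $\varphi_0\in L^*\subset L^*_{I_{\mb u}}$ automatically. The paper takes a slightly quicker route here: it invokes condition~(3) of Theorem~\ref{nembed} (free $T_\Gamma$-action on~$\mathcal Z$), and freeness of the subgroup $D_\Gamma\subset T_\Gamma$ on $\mathcal R\subset\mathcal Z$ is then immediate.
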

\begin{proof}
Statement (1) is clear. Since $D_P$ acts freely on $T_P$, the
projection $N=\mathcal R\times_{D_P}T_P\to T_P/D_P$ onto the
second factor is a fibre bundle with fibre~$\mathcal R$. Then~(2)
follows from the fact that $T_P/D_P\cong T^{m-n}$.

If $N\to\C^m$ is an embedding, then $T_P$ acts freely on~$\mathcal
Z$ by Theorem~\ref{nembed} and the action of $D_P$ on $\mathcal R$
is also free. Therefore, the projection $N=\mathcal
R\times_{D_P}T_P\to \mathcal R/D_P$ onto the first factor is a
principal $T_P$-bundle, which proves~(3).
\end{proof}

\begin{remark}
The quotient $\mathcal R_P/D_P$ is the \emph{real toric variety},
or the \emph{small cover}, corresponding to the Delzant
polytope~$P$. These manifolds have been studied along with
nonsingular toric varieties, see~\cite{da-ja91}
and~\cite{bu-pa02}.
\end{remark}

\begin{example}[one quadric]\label{1quad}
Suppose that $m-n=1$, that is, $\mathcal R$ is given by a single
equation
\begin{equation}\label{1q}
  \gamma_1u_1^2+\cdots+\gamma_mu_m^2=c
\end{equation}
in $\R^m$, where $\gamma_k\in\R$. If $\mathcal R$ is compact, then
$\mathcal R\cong S^{m-1}$ is a sphere, and the corresponding
polytope $P$ is an $n$-simplex~$\Delta^n$. In this case, $N\cong
S^{m-1}\times_{\Z/2}S^1$, where the generator of $\Z/2$ acts by
the standard free involution on $S^1$ and by a certain involution
$\tau$ on~$S^{m-1}$. The topological type of $N$ depends
on~$\tau$. Namely,
\[
  N\cong\begin{cases}S^{m-1}\times S^1&\text{if $\tau$ preserves the orientation of }S^{m-1},\\
  \mathcal K^{m}&\text{if $\tau$ reverses the orientation of }S^{m-1},\end{cases}
\]
where $\mathcal K^m$ is the \emph{$m$-dimensional Klein bottle}.

\begin{proposition}\label{1qemb}
In the case $m-n=1$ (one quadric) we obtain an $H$-minimal
Lagrangian embedding of $N\cong S^{m-1}\times_{\Z/2}S^1$ in $\C^m$
if and only if $\gamma_1=\cdots=\gamma_m$ in~\eqref{1q}. In this
case, the topological type of $N=N(m)$ depends only on the parity
of~$m$ and is given by
\begin{align*}\label{m=1}
  N(m)&\cong S^{m-1}\times S^1&&\text{if $m$ is even},\\
  N(m)&\cong\mathcal K^{m}&&\text{if $m$ is odd}.
\end{align*}
\end{proposition}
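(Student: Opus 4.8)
The plan is to apply Theorem~\ref{delz} (or equivalently condition~(2) of Theorem~\ref{nembed}) to the special case $m-n=1$, and then to identify the resulting embeddable $N$ by tracing through the description of $N$ in Example~\ref{1quad}. First I would set up the relevant data. With $m-n=1$ the lattice $L=\Z\langle\gamma_1,\ldots,\gamma_m\rangle$ sits inside $\R$, and after rescaling (a linear equivalence) we may take $L=\Z$, so each $\gamma_k$ is a nonzero integer and $\mathrm{gcd}(\gamma_1,\ldots,\gamma_m)=1$. Compactness of $\mathcal R$ forces, by Proposition~\ref{rzsmooth} applied with condition~(b) and $m-n=1$ (or directly from the canonical form~\eqref{canform}), that all $\gamma_k$ have the same sign; up to replacing $\mb c$ by $-\mb c$ we may assume $\gamma_k>0$ for every $k$, so $\gamma_k\in\Z_{>0}$ and $\mathrm{gcd}(\gamma_1,\ldots,\gamma_m)=1$.

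Next I would analyse the embedding condition. By Theorem~\ref{nembed}(2), $i_\Gamma$ is an embedding iff $L_{I_{\mb u}}=L$ for every $\mb u\in\mathcal R$. Since $m-n=1$, $L_{I_{\mb u}}=\Z\langle\gamma_k\colon k\notin I_{\mb u}\rangle=\mathrm{gcd}(\gamma_k\colon k\notin I_{\mb u})\cdot\Z$, so the condition $L_{I_{\mb u}}=L=\Z$ says precisely that $\mathrm{gcd}(\gamma_k\colon k\notin I_{\mb u})=1$ for every subset of indices of the form $[m]\setminus I_{\mb u}$ that is actually realised by a point of $\mathcal R$. For a compact $\mathcal R$ with all $\gamma_k>0$, every subset $S\subseteq[m]$ with $|S|\ge 2$ arises as $[m]\setminus I_{\mb u}$ for a suitable $\mb u$ (put the mass on the coordinates in $S$: since $c>0$ and the $\gamma_k$ are positive one can solve $\sum_{k\in S}\gamma_k u_k^2=c$ with all these $u_k\neq0$; note $|S|=1$ is impossible when $m-n=1$ by Proposition~\ref{rzsmooth}(b)). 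Hence the embedding condition is equivalent to: $\mathrm{gcd}(\gamma_j,\gamma_k)=1$ for every pair $j\ne k$. Taking a point supported on just two coordinates shows this pairwise-coprimality is necessary; conversely it clearly implies $\mathrm{gcd}$ of any $\ge2$ of them is $1$. Finally, pairwise coprimality of positive integers together with $\sum\gamma_k u_k^2=c$ defining a smooth nondegenerate quadric — I would argue this forces $\gamma_1=\cdots=\gamma_m=1$. The cleanest route: if some $\gamma_k\ge 2$, pick a prime $p\mid\gamma_k$; pairwise coprimality means $p\nmid\gamma_j$ for $j\ne k$, which is consistent, so this alone does not do it — instead I use Theorem~\ref{delz}: the Delzant condition for the simplex $P=\Delta^n$ (Example~\ref{1quad}) says the $m=n+1$ vectors $\mb a_1,\ldots,\mb a_m$, any $n$ of which meet at a vertex, form a basis of $\Lambda$; the normal fan of $\Delta^n$ realised with these weights is Delzant iff all facet weights are $1$, which back under the correspondence $\gamma_k\leftrightarrow$ weights translates to $\gamma_1=\cdots=\gamma_m$. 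Since we normalised $L=\Z$ and the $\gamma_k$ are equal positive integers with gcd $1$, they all equal $1$. Stated invariantly: the embedding holds iff all $\gamma_k$ are equal.

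Then I would identify the topology. When $\gamma_1=\cdots=\gamma_m$, equation~\eqref{1q} becomes $u_1^2+\cdots+u_m^2=c/\gamma_1$, so $\mathcal R\cong S^{m-1}$, $P=\Delta^n$, and $D_\Gamma\cong\Z/2$ acts on $T_\Gamma\cong S^1$ by the free involution and on $S^{m-1}$ by the involution $\tau$ coming from the action of $\frac12 L^*/L^*$; concretely, the nontrivial element of $D_\Gamma$ multiplies each coordinate $u_k$ by $e^{2\pi i\langle\gamma_k,\varphi\rangle}$ with $\varphi=\tfrac12$, hence by $e^{\pi i\gamma_1}=(-1)^{\gamma_1}$ — wait, more usefully, after the normalisation $\gamma_k=1$ the involution is $\mb u\mapsto-\mb u$, the antipodal map on $S^{m-1}$. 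Thus $N\cong S^{m-1}\times_{\Z/2}S^1$ with $\Z/2$ acting antipodally on $S^{m-1}$ and by the standard free involution on $S^1$. The antipodal map on $S^{m-1}$ is orientation-preserving iff $m-1$ is odd, i.e. iff $m$ is even, so by the dichotomy already recorded in Example~\ref{1quad} we get $N(m)\cong S^{m-1}\times S^1$ for $m$ even and $N(m)\cong\mathcal K^m$ for $m$ odd. The main obstacle is the step identifying ``pairwise $\mathrm{gcd}=1$ for all realised index subsets'' with ``all $\gamma_k$ equal'': one must be careful that compactness forces all $\gamma_k$ of one sign and that the diagram chase of Theorem~\ref{delz} for $P=\Delta^n$ genuinely pins the weights to a common value rather than merely to pairwise-coprime values; invoking Theorem~\ref{delz} directly (the simplex is Delzant only in its standard realisation) is the efficient way to close this gap.
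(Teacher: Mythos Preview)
Your misstep is the claim that ``$|S|=1$ is impossible when $m-n=1$ by Proposition~\ref{rzsmooth}(b).'' Condition~(b) there reads $k\ge m-n$; with $m-n=1$ this is $k\ge1$, so singletons are \emph{allowed}, not forbidden. And indeed they occur: since all $\gamma_k>0$ and $c>0$, the point $\mb u=(0,\ldots,0,\sqrt{c/\gamma_i},0,\ldots,0)$ lies on $\mathcal R$ for every $i$, with $[m]\setminus I_{\mb u}=\{i\}$. (These are precisely the preimages of the vertices of the simplex $P=\Delta^n$.) Once you use these points, condition~(2) of Theorem~\ref{nembed} says directly that $\Z\langle\gamma_i\rangle=L$ for each $i$, i.e.\ each $\gamma_i$ is a generator of $L\cong\Z$; since the $\gamma_i$ are all positive this forces $\gamma_1=\cdots=\gamma_m$. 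That is exactly the paper's one-line argument, and it makes the whole pairwise-$\gcd$ analysis and the detour through Theorem~\ref{delz} unnecessary.

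The detour itself is also not airtight as written. You correctly notice that pairwise coprimality alone does not force equality (take $\gamma=(2,3,5,\ldots)$), and then appeal to ``the simplex is Delzant only in its standard realisation'' without justification. If you unwind the diagram chase in the proof of Theorem~\ref{delz} for $P=\Delta^n$ and a vertex $\mb x$ with $I_{\mb u}=[m]\setminus\{i\}$, the condition ``$\Gamma\cdot\iota$ onto'' becomes exactly ``$\gamma_i$ generates $L$'', so you land back on the same elementary statement you missed by excluding $|S|=1$. Your identification of the topology (antipodal involution on $S^{m-1}$, orientation-preserving iff $m$ is even) is correct and matches the paper.
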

\begin{proof}
Since there is a point $\mb u\in\mathcal R$ with only one nonzero
coordinate, Theorem~\ref{nembed} implies that $N$ embeds in $\C^m$
if only if $\gamma_i$ generates the same lattice as the whole set
$\gamma_1,\ldots,\gamma_m$ for each~$i$. Therefore,
$\gamma_1=\cdots=\gamma_m$. In this case, $D_\Gamma\cong\Z/2$ acts
by the standard antipodal involution on $S^{m-1}$, which preserves
orientation if $m$ is even and reverses orientation otherwise.
\end{proof}

Both examples of $H$-minimal Lagrangian embeddings given by
Proposition~\ref{1qemb} are well known. In fact, $S^{m-1}\times
S^1$ admits a Lagrangian embedding in~$\C^m$ for odd $m$
(see~\cite{nemi09}), but we do not know if it can be made
$H$-minimal. The Klein bottle $\mathcal K^m$ with even $m$ does
not admit Lagrangian embeddings in~$\C^m$ (see~\cite{nemi09}
and~\cite{Sh}).
\end{example}

\begin{example}[two quadrics]
In the case $m-n=2$, the topology of $\mathcal R$ and $N$ can be
described completely by analysing the action of the two commuting
involutions on the intersection of quadrics.

First, using Proposition~\ref{propcf}, we write $\mathcal R$ in
the form
\begin{equation}\label{2q}
\begin{aligned}
  \gamma_{11}u_1^2+\cdots+\gamma_{1m}u_m^2&=c_1,\\
  \gamma_{21}u_1^2+\cdots+\gamma_{2m}u_m^2&=0,
\end{aligned}
\end{equation}
where $c_1>0$ and $\gamma_{1i}>0$ for all $i$.

\begin{proposition}
There is a number $p$, \ $0<p<m$, such that $\gamma_{2i}>0$ for
$i=1,\ldots,p$ and $\gamma_{2i}<0$ for $i=p+1,\ldots,m$
in~\eqref{2q}, possibly after a reordering of the coordinates
$u_1,\ldots,u_m$. The corresponding manifold $\mathcal R=\mathcal
R(p,q)$ with $q=m-p$ is diffeomorphic to $S^{p-1}\times S^{q-1}$.
Its quotient polytope $P$ either coincides with $\Delta^{m-2}$ (if
one of the inequalities in~\eqref{ptope} is redundant) or is
combinatorially equivalent to the product
$\Delta^{p-1}\times\Delta^{q-1}$ (if there are no redundant
inequalities).
\end{proposition}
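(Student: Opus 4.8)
The plan is to prove the three assertions in turn, obtaining the statement about $P$ from an explicit $(\Z/2)^m$-equivariant diffeomorphism of~$\mathcal R$.

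\emph{Sign pattern.} I would start from conditions (a)--(b) of Proposition~\ref{rzsmooth}. By (b), $\mb c$ cannot lie on a single ray $\sigma\langle\gamma_k\rangle$. If $\gamma_{2k}=0$ for some $k$, then $\gamma_k=(\gamma_{1k},0)^t$ is a positive multiple of $\mb c=(c_1,0)^t$ (recall $\gamma_{1k}>0$ and $c_1>0$), so $\mb c\in\sigma\langle\gamma_k\rangle$, a contradiction; hence $\gamma_{2k}\ne0$ for every $k$. If all the $\gamma_{2k}$ had the same sign, then every nonzero vector of $\sigma\langle\gamma_1,\ldots,\gamma_m\rangle$ would have second coordinate of that sign, so $\mb c$ could not lie in this cone, contradicting~(a). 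Reordering the coordinates so that the indices with $\gamma_{2k}>0$ come first gives $0<p<m$ with the asserted pattern; set $q=m-p$ and write $\mathcal R=\mathcal R(p,q)$.

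\emph{Diffeomorphism type.} Next I would substitute $v_k=\sqrt{\gamma_{1k}}\,u_k$, a diffeomorphism of $\R^m$ commuting with the coordinatewise $(\Z/2)^m$-action, turning~\eqref{2q} into $|v|^2=c_1$ and $\sum_{k\le p}\delta_kv_k^2=\sum_{k>p}|\delta_k|v_k^2$ with $\delta_k=\gamma_{2k}/\gamma_{1k}\ne0$. Writing $v=(v',v'')\in\R^p\times\R^q$, on $\mathcal R$ the common value of the two sides of the second equation equals $\sum_{k\le p}\delta_k(v'_k)^2\ge0$, and it vanishes only if $v'=0$ and $v''=0$, which $|v|^2=c_1>0$ forbids; thus $v'\ne0$ and $v''\ne0$ everywhere on~$\mathcal R$. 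Hence $v\mapsto(v'/|v'|,\,v''/|v''|)\in S^{p-1}\times S^{q-1}$ is defined on $\mathcal R$, and for a prescribed direction $(\xi,\eta)$ the two equations determine the positive radii $|v'|,|v''|$ uniquely and smoothly (the relevant quadratic forms are positive definite, hence bounded below on the compact spheres), giving a smooth inverse. This map is therefore a diffeomorphism, visibly equivariant for $(\Z/2)^m=(\Z/2)^p\times(\Z/2)^q$; composing with the rescaling yields an equivariant diffeomorphism $\mathcal R\cong S^{p-1}\times S^{q-1}$.

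\emph{Quotient polytope.} Passing to orbit spaces, $P=\mathcal R/(\Z/2)^m\cong\bigl(S^{p-1}/(\Z/2)^p\bigr)\times\bigl(S^{q-1}/(\Z/2)^q\bigr)$. The map $[v]\mapsto(v_1^2,\ldots,v_p^2)$ identifies $S^{p-1}/(\Z/2)^p$ with the standard simplex $\Delta^{p-1}=\{t\in\R^p_\ge\colon t_1+\dots+t_p=1\}$, carrying the image of each subsphere $\{v_i=0\colon i\in S\}$ onto the face $\{t_i=0\colon i\in S\}$; so it is an isomorphism of face posets, and likewise for the other factor. Hence $P$ is combinatorially the product $\Delta^{p-1}\times\Delta^{q-1}$, with the facet $F_i=P\cap\{y_i=0\}$ of~\eqref{ptope} corresponding to $(\text{a facet of }\Delta^{p-1})\times\Delta^{q-1}$ for $i\le p$ and to $\Delta^{p-1}\times(\text{a facet of }\Delta^{q-1})$ for $i>p$. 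If $p\ge2$ and $q\ge2$, every $F_i$ is a genuine facet and~\eqref{ptope} is irredundant; if $p=1$ (resp.\ $q=1$) the factor $\Delta^0$ is a point with no facet, so $F_1$ (resp.\ $F_m$) is empty, i.e.\ the corresponding inequality is redundant, and $\Delta^0\times\Delta^{m-2}=\Delta^{m-2}$.

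The step I expect to require the most care is this last one: making precise that the orbit-space homeomorphism genuinely respects the polytope (face) structure, and that emptiness of $F_i$ is equivalent to redundancy of the $i$-th inequality in~\eqref{ptope}. A more computational alternative sidesteps orbit spaces: since~\eqref{ptope} is generic, $P$ is a simple $(m-2)$-polytope whose vertices are obtained by setting all but two of the $y_k$ equal to zero; the resulting $2\times2$ linear system has a strictly positive solution precisely for the pairs $\{a,b\}$ with $a\le p<b$, yielding $pq$ vertices whose vertex-facet incidences coincide, under the evident relabelling of the facets, with those of $\Delta^{p-1}\times\Delta^{q-1}$. Since a simple polytope is determined up to combinatorial equivalence by its vertex-facet incidences, this reproves the claim and makes the dichotomy transparent: the redundant case $\{p,q\}\ni1$ gives $\Delta^{m-2}$, the irredundant case $\Delta^{p-1}\times\Delta^{q-1}$.
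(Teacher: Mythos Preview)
Your argument is correct. For the sign pattern and the diffeomorphism $\mathcal R\cong S^{p-1}\times S^{q-1}$ you follow essentially the paper's route: the paper phrases the second step tersely as ``the second quadric is a cone over a product of two ellipsoids, intersected with the ellipsoid given by the first quadric'', and your rescaling $v_k=\sqrt{\gamma_{1k}}\,u_k$ together with the explicit map $v\mapsto(v'/|v'|,v''/|v''|)$ and its inverse is exactly a rigorous unpacking of that sentence.

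For the polytope the two proofs genuinely diverge. The paper does not use the diffeomorphism at all here; it simply invokes the combinatorial classification that any simple $(m-2)$-polytope with at most $m$ facets is a product of simplices, and reads off the redundant case $p=1$ or $q=1$. Your approach instead exploits the $(\Z/2)^m$-equivariance you built into the diffeomorphism to identify the orbit space directly with $\Delta^{p-1}\times\Delta^{q-1}$, matching facets explicitly; your alternative vertex-count does the same thing computationally. What you gain is a self-contained argument that also pins down which facets belong to which simplex factor (useful later when one analyses the $D_\Gamma$-action), and you avoid the external reference. What the paper's approach buys is brevity, and it makes clear that the result is an instance of a general phenomenon rather than something special to these quadrics. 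Your caution about checking that the orbit-space homeomorphism respects face lattices is well placed, but your identification of $F_i$ with the image of $\{u_i=0\}$ handles it; the equivalence ``$F_i=\varnothing\Leftrightarrow i$-th inequality redundant'' is exactly the paper's characterisation of redundancy for generic presentations.
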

\begin{proof}
We observe that $\gamma_{2i}\ne0$ for all~$i$ in~\eqref{2q}, as
$\gamma_{2i}=0$ implies that $\mb c$ is in the cone generated by
one vector $\gamma_i$, which contradicts
Proposition~\ref{rzsmooth}~(b). By reordering the coordinates, we
can achieve that the first $p$ of the numbers $\gamma_{2i}$ are
positive and the rest are negative. Then $1<p<m$, because
otherwise~\eqref{2q} is empty. Now,~\eqref{2q} is the intersection
of the cone over the product of two ellipsoids of dimensions $p-1$
and $q-1$ (given by the second quadric) with an
$(m-1)$-dimensional ellipsoid (given by the first quadric).
Therefore, $\mathcal R(p,q)\cong S^{p-1}\times S^{p-1}$. The
statement about the polytope follows from the combinatorial fact
that a simple $n$-polytope with up to $n+2$ facets is
combinatorially equivalent to a product of simplices (see,
e.g.,~\cite[Example~I.8]{pano10}); the case of one redundant
inequality corresponds to $p=1$ or $q=1$.
\end{proof}

Now, consider the action of $D_\Gamma\cong(\Z/2)^2$ on~$\mathcal
R(p,q)$. An element $\varphi\in D_\Gamma=\frac12L^*/L^*$ acts on
$\mathcal R$ by
\[
  (u_1,\ldots,u_m)\mapsto
  (\varepsilon_1(\varphi)u_1,\ldots,\varepsilon_m(\varphi)u_m),
\]
where $\varepsilon_k(\varphi)=e^{2\pi
i\langle\gamma_k,\varphi\rangle}=\pm1$ for $1\le k\le m$.

\begin{lemma}\label{free1}
Suppose that $D_\varGamma$ acts on $\mathcal R(p,q)$ freely and
$\varepsilon_i(\varphi)=1$ for some $\varphi\in D_\varGamma$ and
$1\le i\le p$. Then $\varepsilon_j(\varphi)=-1$ for $p+1\le j\le
m$.
\end{lemma}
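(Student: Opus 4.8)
The plan is to argue by contradiction, exhibiting a fixed point of a nontrivial element of $D_\Gamma$ whenever the conclusion fails. Suppose $\varphi\in D_\Gamma$ acts freely on $\mathcal R(p,q)$ with $\varepsilon_i(\varphi)=1$ for some $i\le p$, but $\varepsilon_l(\varphi)=1$ for some $l$ with $p+1\le l\le m$ as well. The first step is to write down an explicit point of $\mathcal R(p,q)$ supported only on the coordinates $i$ and $l$: since $\gamma_{2i}>0$ and $\gamma_{2l}<0$, the point with $u_i^2=-\gamma_{2l}t$, $u_l^2=\gamma_{2i}t$, and all other coordinates zero satisfies the second equation of~\eqref{2q}, and a suitable choice of $t>0$ (together with signs) then satisfies the first equation because $\gamma_{11},\gamma_{1m}>0$ and $c_1>0$. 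Thus this point lies in $\mathcal R(p,q)$.

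Next, observe that $\varphi$ fixes this point: its only nonzero coordinates are the $i$th and $l$th, and on both of these $\varepsilon_i(\varphi)=\varepsilon_l(\varphi)=1$ by assumption, so the action $(u_1,\ldots,u_m)\mapsto(\varepsilon_1(\varphi)u_1,\ldots,\varepsilon_m(\varphi)u_m)$ leaves it unchanged. Finally, I must check that $\varphi$ is genuinely nontrivial in $D_\Gamma$, so that this contradicts freeness. This is where the hypothesis is used essentially: if $\varphi$ were the identity then there would be nothing to prove in the lemma, so we may assume $\varphi\ne0$ in $\frac12 L^*/L^*$, and then the existence of a fixed point contradicts the freeness of the $D_\Gamma$-action. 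Hence no such $l$ exists, i.e. $\varepsilon_l(\varphi)=-1$ for all $p+1\le l\le m$.

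The step I expect to require the most care is the verification that the two-coordinate point actually lies in $\mathcal R(p,q)$ — one has to confirm that the sign of $\gamma_{2l}$ being negative is exactly what makes $u_i^2,u_l^2$ simultaneously positive in the homogeneous (second) equation, and that the positivity of all $\gamma_{1k}$ and of $c_1$ then leaves genuine freedom to scale into the first quadric. Everything else is a short computation with the explicit formula for the $D_\Gamma$-action. I would also remark that the same argument, applied with the roles reversed, shows symmetrically that if $\varepsilon_l(\varphi)=1$ for some $l>p$ then $\varepsilon_i(\varphi)=-1$ for all $i\le p$; this symmetric form may be what gets used downstream in classifying the free actions.
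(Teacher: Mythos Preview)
Your proof is correct and follows exactly the paper's strategy: assume $\varepsilon_j(\varphi)=1$ for some $j>p$, exhibit a point of $\mathcal R(p,q)$ supported only on coordinates $i$ and $j$ (the paper simply asserts such a point exists by reference to~\eqref{2q}, while you spell out the scaling argument), and observe that $\varphi$ fixes it, contradicting freeness. One small correction to your commentary: for $\varphi=0$ the conclusion of the lemma actually \emph{fails} rather than there being ``nothing to prove'', so the lemma is implicitly stated for nontrivial~$\varphi$ (as the paper uses it in the next lemma); otherwise your treatment is complete.
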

\begin{proof}
Assume the opposite, that is, that $\varepsilon_i(\varphi)=1$ for
some $1\le i\le p$ and $\varepsilon_j(\varphi)=1$ for some $p+1\le
j\le m$. Then $\gamma_{2i}>0$ and $\gamma_{2j}<0$ in~\eqref{2q},
so we can choose $\mb u\in\mathcal R(p,q)$ whose only nonzero
coordinates are $u_i$ and~$u_j$. The element $\varphi\in D_\Gamma$
fixes this $\mb u$, leading to a contradiction.
\end{proof}

\begin{lemma}\label{free2}
Suppose that $D_\Gamma$ acts on $\mathcal R(p,q)$ freely. Then
there exist two generating involutions $\varphi_1,\varphi_2\in
D_\Gamma\cong\Z_2\times\Z_2$ whose action on $\mathcal R(p,q)$ is
described by either~{\rm(1)} or~{\rm(2)} below, possibly after a
reordering of coordinates:
\begin{itemize}
\item[(1)]
$\begin{aligned}
  \varphi_1\colon(u_1,\ldots,u_m)&\mapsto
  (u_1,\ldots,u_k,-u_{k+1},\ldots,-u_p,-u_{p+1},\ldots,-u_m),\\[-2pt]
  \varphi_2\colon(u_1,\ldots,u_m)&\mapsto
  (-u_1,\ldots,-u_k,u_{k+1},\ldots,u_p,-u_{p+1},\ldots,-u_m);
\end{aligned}$\\
\item[(2)]
$\begin{aligned}
  \varphi_1\colon(u_1,\ldots,u_m)&\mapsto
  (-u_1,\ldots,-u_p,u_{p+1},\ldots,u_{p+l},-u_{p+l+1},\ldots,-u_m),\\[-2pt]
  \varphi_2\colon(u_1,\ldots,u_m)&\mapsto
  (-u_1,\ldots,-u_p,-u_{p+1},\ldots,-u_{p+l},u_{p+l+1},\ldots,u_m);
\end{aligned}$
\end{itemize}
here $0\le k\le p$ and $0\le l\le q$.
\end{lemma}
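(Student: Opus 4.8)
The plan is to classify the possible free actions of $D_\Gamma\cong(\Z/2)^2$ on $\mathcal R(p,q)$ by organising the three nonzero elements $\varphi_1,\varphi_2,\varphi_1+\varphi_2$ according to their sign patterns $(\varepsilon_1,\ldots,\varepsilon_m)$, and then to exploit Lemma~\ref{free1} to pin down these patterns up to reordering of coordinates. For each nonzero $\varphi\in D_\Gamma$, introduce the partition of $[m]$ into the fixed set $F(\varphi)=\{k\colon\varepsilon_k(\varphi)=1\}$ and its complement. Split $F(\varphi)$ further into $F^+(\varphi)=F(\varphi)\cap\{1,\ldots,p\}$ and $F^-(\varphi)=F(\varphi)\cap\{p+1,\ldots,m\}$. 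Lemma~\ref{free1} (together with its mirror image obtained by swapping the roles of the two blocks, which follows by the same argument) says precisely that for every nonzero $\varphi$ at most one of $F^+(\varphi),F^-(\varphi)$ can be a proper subset: if $F^+(\varphi)\ne\varnothing$ then $F^-(\varphi)=\varnothing$, and symmetrically. In other words, each nonzero $\varphi$ fixes coordinates only inside one of the two blocks $\{1,\ldots,p\}$ or $\{p+1,\ldots,m\}$ (or fixes nothing at all, i.e. acts as $-\mathrm{id}$).

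First I would dispose of the degenerate cases. If one of the three nonzero elements acts as $-\mathrm{id}$, say $\varphi_1+\varphi_2=-\mathrm{id}$, then $\varepsilon_k(\varphi_1)\varepsilon_k(\varphi_2)=-1$ for all $k$, so on each coordinate exactly one of $\varphi_1,\varphi_2$ is $+1$. By the previous paragraph $F(\varphi_1)$ lies in one block and $F(\varphi_2)=[m]\setminus F(\varphi_1)$ lies in the other; hence $F(\varphi_1)\subseteq\{1,\ldots,p\}$ and $F(\varphi_2)\subseteq\{p+1,\ldots,m\}$ and in fact $F(\varphi_1)=\{1,\ldots,p\}$, $F(\varphi_2)=\{p+1,\ldots,m\}$ (since every $k\le p$ must be fixed by one of them and it cannot be $\varphi_2$). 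After reordering inside each block this is pattern~(1) with $k=p$. Similarly the other two sub-cases ($\varphi_1=-\mathrm{id}$ or $\varphi_2=-\mathrm{id}$) give pattern~(2) with $l=0$ or $l=q$ after relabelling.

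Now assume no nonzero element is $-\mathrm{id}$, so each of $\varphi_1,\varphi_2,\varphi_1+\varphi_2$ has a nonempty fixed set contained in exactly one block. There are two genuinely different distributions of these three fixed sets among the two blocks: either all three lie in the same block, or two lie in one block and one in the other. I would check that the first distribution is impossible for a \emph{free} action: if $F(\varphi_1),F(\varphi_2)\subseteq\{1,\ldots,p\}$ then $F(\varphi_1+\varphi_2)$ contains every $k\le p$ outside $F(\varphi_1)\cup F(\varphi_2)$; picking coordinates in one block forces a common fixed point unless $F(\varphi_1)\cup F(\varphi_2)=\{1,\ldots,p\}$ and, dually on the other block where all three act as $-1$, one finds that some $\varepsilon_l$ pattern produces a fixed $\mb u$ supported on two coordinates of $\{1,\ldots,p\}$ — I expect a short case check to rule this out, giving a contradiction with freeness. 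So we are left with the second distribution: after swapping $\varphi_1\leftrightarrow\varphi_2$ if needed, $F(\varphi_1)$ and $F(\varphi_1+\varphi_2)$ lie in one block and $F(\varphi_2)$ in the other (or a relabelled version of this). Writing out what this means coordinate by coordinate — on the block containing two of the fixed sets the three elements have sign patterns whose product is $+1$, forcing the block to split into three subsets according to which element is $-1$ there — and then reordering coordinates inside each block, one arrives at exactly pattern~(1) (when the two-fixed-sets block is $\{1,\ldots,p\}$ and it splits into the ``fixed by $\varphi_1$'' part of size $k$ and the ``fixed by $\varphi_2$'' part of size $p-k$) or pattern~(2) (when it is $\{p+1,\ldots,m\}$). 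The parameters $k\in\{0,\ldots,p\}$, $l\in\{0,\ldots,q\}$ are simply the sizes of these sub-blocks, and the boundary values $k=0,p$ or $l=0,q$ recover the degenerate cases already treated.

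The main obstacle is the bookkeeping in the last paragraph: one must be careful that Lemma~\ref{free1} and its block-swapped variant, applied to all three nonzero elements simultaneously, really do leave only the two listed normal forms, and in particular that the ``all three fixed sets in one block'' configuration is incompatible with freeness. This is where I would spend most of the effort — everything else is a routine translation of sign patterns into the displayed formulas after an appropriate permutation of the coordinates $u_1,\ldots,u_m$ within each of the two blocks.
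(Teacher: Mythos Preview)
Your approach is essentially the paper's, but you have made it harder than it needs to be and introduced one muddled step.

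The paper's argument is this: by Lemma~\ref{free1} (and its mirror), every nonzero $\varphi$ has $\varepsilon_i(\varphi)=-1$ on the whole of one of the two blocks. By pigeonhole, two of the three nonzero elements --- call them $\varphi_1,\varphi_2$ --- are $-1$ on the \emph{same} block. Their sum is then $+1$ on that entire block, so Lemma~\ref{free1} forces $\varphi_1+\varphi_2$ to be $-1$ on the \emph{other} block; equivalently, $\varepsilon_i(\varphi_1)\ne\varepsilon_i(\varphi_2)$ for every $i$ in the other block, i.e.\ the fixed sets of $\varphi_1$ and $\varphi_2$ partition it. Reordering inside that block gives pattern~(1) or~(2). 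No separate treatment of the $-\mathrm{id}$ case is needed: it is absorbed as $k=0$ or $k=p$ (resp.\ $l=0$ or $l=q$).

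Your handling of the ``all three fixed sets in one block'' case is where things go wrong. You write that on the other block ``all three act as $-1$'', and then look for a freeness contradiction. But that premise is impossible by the group law alone: if $\varphi_1$ and $\varphi_2$ both act as $-1$ on the second block, then $\varphi_1+\varphi_2$ acts as $(+1)$ there, so its fixed set meets the second block and the configuration ``all three fixed sets inside the first block'' never arises. There is no case check to do, and freeness plays no role at this point. Once you replace your attempted freeness argument by this one-line observation, your proof collapses to the paper's.
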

\begin{proof}
By Lemma~\ref{free1}, for each of the three nonzero elements
$\varphi\in D_\Gamma$, we have either $\varepsilon_i(\varphi)=-1$
for $1\le i\le p$ or $\varepsilon_i(\varphi)=-1$ for $p+1\le i\le
m$. Therefore, we can choose two different nonzero elements
$\varphi_1,\varphi_2\in D_\Gamma$ such that either
$\varepsilon_i(\varphi_j)=-1$ for $j=1,2$ and $p+1\le i\le m$, or
$\varepsilon_i(\varphi_j)=-1$ for $j=1,2$ and $1\le i\le p$. This
corresponds to the cases (1) and (2) above, respectively. In the
former case, after reordering the coordinates, we may assume that
$\varphi_1$ acts as in~(1). Then $\varphi_2$ also acts as in~(1),
since otherwise the sum $\varphi_1+\varphi_2$ cannot act freely by
Lemma~\ref{free1}. The second case is treated similarly.
\end{proof}

Each of the actions of $D_\Gamma$ described in Lemma~\ref{free2}
can be realised by a particular intersection of
quadrics~\eqref{2q}. For example,
\begin{equation}\label{2qex}
\begin{aligned}
  2u_1^2+\cdots+2u_k^2+u_{k+1}^2+\cdots+u_p^2+
  u_{p+1}^2+\cdots+u_m^2&=3,\\
  u_1^2+\cdots+u_k^2+2u_{k+1}^2+\cdots+2u_p^2-
  u_{p+1}^2-\cdots-u_m^2&=0
\end{aligned}
\end{equation}
gives the first action of Lemma~\ref{free2}; the second action is
realised similarly. Note that the lattice $L$ corresponding
to~\eqref{2qex} is a sublattice of index 3 in~$\Z^2$. We can
rewrite~\eqref{2qex} as
\begin{equation}\label{2qex1}
\begin{aligned}
  u_1^2+\cdots+u_k^2&+u_{k+1}^2+\cdots+u_p^2&&=1,\\
  u_1^2+\cdots+u_k^2&&+u_{p+1}^2+\cdots+u_m^2&=2,
\end{aligned}
\end{equation}
in which case $L=\Z^2$. The action of the two involutions
$\psi_1,\psi_2\in D_\Gamma=\frac12\Z^2/\Z^2$ corresponding to the
standard basis vectors of $\frac12\Z^2$ is given by
\begin{equation}\label{2inv}
\begin{aligned}
  \psi_1\colon(u_1,\ldots,u_m)&\mapsto
  (-u_1,\ldots,-u_k,-u_{k+1},\ldots,-u_p,u_{p+1},\ldots,u_m),\\
  \psi_2\colon(u_1,\ldots,u_m)&\mapsto
  (-u_1,\ldots,-u_k,u_{k+1},\ldots,u_p,-u_{p+1},\ldots,-u_m).
\end{aligned}
\end{equation}

We denote the manifold $N_\Gamma$ corresponding to~\eqref{2qex1}
by $N_k(p,q)$. We have
\begin{equation}\label{nkpq}
  N_k(p,q)\cong\mathcal (S^{p-1}\times
  S^{q-1})\times_{\Z/2\times\Z/2}(S^1\times S^1),
\end{equation}
and the action of the two involutions on $S^{p-1}\times S^{q-1}$
is given by the $\psi_1$ and $\psi_2$ defined by~\eqref{2inv}.
Note that $\psi_1$ acts trivially on $S^{q-1}$ and acts
antipodally on $S^{p-1}$. Therefore,
\[
  N_k(p,q)\cong N(p)\times_{\Z/2}(S^{q-1}\times S^1),
\]
where $N(p)$ is the manifold from Proposition~\ref{1qemb}. If
$k=0$, then the second involution $\psi_2$ acts trivially on
$N(p)$, and $N_0(p,q)$ coincides with the product $N(p)\times
N(q)$ of the two manifolds from Example~\ref{1quad}. In general,
the projection $N_k(p,q)\to S^{q-1}\times_{\Z/2}S^1=N(q)$
describes $N_k(p,q)$ as the total space of a fibration over $N(q)$
with fibre~$N(p)$.

We summarise the above facts and observations in the following
topological classification result for compact $H$-minimal
Lagrangian submanifolds $N\subset\C^m$ obtained from intersections
of two quadrics.

\begin{theorem}\label{2qemb}
Assume that $m-n=2$ (two quadrics) and $N_\Gamma\to\C^m$ is the
embedding of the corresponding $H$-minimal Lagrangian submanifold.
Then $N_\Gamma$ is diffeomorphic to some $N_k(p,q)$ given
by~\eqref{nkpq}, where $p+q=m$, $0<p<m$ and $0\le k\le p$.
Moreover, any such triple $(k,p,q)$ can be realised by~$N_\Gamma$.
\end{theorem}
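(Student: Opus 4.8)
The plan is to combine the structural analysis already carried out in Lemmas~\ref{free1} and~\ref{free2} with the explicit realisation~\eqref{2qex1}. The argument splits into two halves: showing that every embedded $N_\Gamma$ arising from two quadrics is diffeomorphic to some $N_k(p,q)$, and conversely showing that each admissible triple $(k,p,q)$ actually occurs.

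For the first half, I would start from the canonical form~\eqref{2q} provided by Proposition~\ref{canform}, extract the number $p$ (so that $\mathcal R_\Gamma\cong\mathcal R(p,q)\cong S^{p-1}\times S^{q-1}$ with $q=m-p$ and $0<p<m$), and then invoke Theorem~\ref{nembed}: since $N_\Gamma\to\C^m$ is an embedding, $D_\Gamma\cong(\Z/2)^2$ acts freely on $\mathcal R(p,q)$. Lemma~\ref{free2} then puts the action of a suitable basis $\varphi_1,\varphi_2$ of $D_\Gamma$ into one of the two normal forms (1) or~(2), with parameters $k$ or $l$. In form~(1) the involutions are exactly the $\psi_1,\psi_2$ of~\eqref{2inv}, so $N_\Gamma\cong N_k(p,q)$ directly from~\eqref{nkpq}. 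Form~(2) is form~(1) with the roles of the two blocks of coordinates $\{1,\ldots,p\}$ and $\{p+1,\ldots,m\}$ interchanged; after relabelling $(p,q,k)\mapsto(q,p,l)$ one lands back in case~(1), so $N_\Gamma\cong N_l(q,p)$. Either way $N_\Gamma$ is diffeomorphic to some $N_k(p,q)$ with $p+q=m$, $0<p<m$, $0\le k\le p$, and the diffeomorphism type depends only on the combinatorial data $(k,p,q)$ because the fibration description $N_k(p,q)\to N(q)$ with fibre $N(p)$ from the paragraph preceding the theorem is manifestly determined by $(k,p,q)$.

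For the converse I would simply exhibit~\eqref{2qex1} for arbitrary $0<p<m$ and $0\le k\le p$: one checks conditions~(a)--(c) of Proposition~\ref{rzsmooth} and~(c) of Section~\ref{inter} hold (the lattice $L=\Z^2$ by construction), so $\mathcal R_\Gamma$ and $\mathcal Z_\Gamma$ are smooth and $N_\Gamma$ is defined; one checks the embedding criterion of Theorem~\ref{nembed}~(2), namely $L_{I_{\mb u}}=L=\Z^2$ for every $\mb u\in\mathcal R_\Gamma$ — this holds since for any $\mb u\in\mathcal R_\Gamma$ at least one coordinate from each of the two groups is nonzero (the quadrics~\eqref{2qex1} forbid a point with, say, all of $u_1,\ldots,u_k$ and $u_{k+1},\ldots,u_p$ vanishing simultaneously, since the first equation would fail), and the two vectors $\gamma_i$ attached to those surviving coordinates are the standard basis of $\Z^2$. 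Then $D_\Gamma=\frac12\Z^2/\Z^2$ acts through $\psi_1,\psi_2$ as in~\eqref{2inv}, and $N_\Gamma$ is precisely $N_k(p,q)$ by~\eqref{nkpq}. This gives every triple.

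The main obstacle is the first half, specifically making the reduction of form~(2) to form~(1) airtight: one must confirm that exchanging the two coordinate blocks is compatible with all the structures involved — the product decomposition $S^{p-1}\times S^{q-1}$, the splitting $L^*$, and the defining quadrics — so that the resulting manifold is genuinely of the shape $N_l(q,p)$ and not merely abstractly diffeomorphic to it by accident. Once that symmetry is spelled out, together with the freeness consequence of Theorem~\ref{nembed}, everything else is the bookkeeping already done in Lemmas~\ref{free1}--\ref{free2} and in the discussion of~\eqref{nkpq}.
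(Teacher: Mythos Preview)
Your proposal is correct and follows essentially the same route as the paper: the theorem is stated as a summary of the preceding example, and your argument reconstructs exactly that discussion (canonical form~\eqref{2q}, freeness of the $D_\Gamma$-action via Theorem~\ref{nembed}, normalisation via Lemma~\ref{free2}, and realisation via~\eqref{2qex1}). One small imprecision: the generators $\varphi_1,\varphi_2$ of form~(1) in Lemma~\ref{free2} are not literally the $\psi_1,\psi_2$ of~\eqref{2inv} but rather $\psi_1\psi_2$ and $\psi_2$; since $N_\Gamma$ depends only on the full $D_\Gamma$-action and not on a choice of basis, this does not affect the conclusion, and similarly your verification of $L_{I_{\mb u}}=\Z^2$ for~\eqref{2qex1} needs the extra observation that the nonzero coordinates of $\mb u$ cannot all lie in the overlap block $\{1,\ldots,k\}$ (the two equations would give $1=2$).
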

\end{example}


In the case of up to two quadrics considered above, the topology
of $\mathcal R_\Gamma$ is relatively simple, and in order to
analyse the topology of $N_\Gamma$, one only needs to describe the
action of involutions on~$\mathcal R_\Gamma$. When the number of
quadrics is more than two, the topology of $\mathcal R_\Gamma$
becomes an issue as well.

\begin{example}[three quadrics]\label{3quad}
In the case $m-n=3$, the topology of compact manifolds $\mathcal
R$ and $\mathcal Z$ was fully described
in~\cite[Theorem~2]{lope89}. Each of these manifolds is
diffeomorphic to a product of three spheres or to a connected sum
of products of spheres with two spheres in each product.

Note that, for $m-n=3$, the manifolds $\mathcal R_P$ (or~$\mathcal
Z_P$) can be distinguished topologically by looking at the planar
\emph{Gale diagrams} of the corresponding simple polytopes~$P$
(see~\cite{bo-me06} or~\cite{pano10} for details). This chimes
with the well-known classification of $n$-dimensional simple
polytopes with $n+3$ facets.

The smallest polytope with $m-n=3$ is a pentagon. It has many
Delzant realisations, for instance,
\[
  P=\bigl\{(x_1,x_2)\in\R^2\colon x_1\ge0,\;x_2\ge0,\;-x_1+2\ge0,\;-x_2+2\ge0,\;
  -x_1-x_2+3\ge0\bigr\}.
\]
In this case, $\mathcal R_P$ is an oriented surface of genus~5 (a
simple combinatorial proof of this can be found
in~\cite[Example~6.40]{bu-pa02}), and $\mathcal Z_P$ is
diffeomorphic to a connected sum of five copies of $S^3\times
S^4$.

We therefore obtain an $H$-minimal Lagrangian submanifold
$N_P\subset\C^5$, which is the total space of a bundle over $T^3$
with fibre a surface of genus~5.
\end{example}

More generally, the manifolds $\mathcal R_P$ corresponding to
polygons are described as follows.

\begin{proposition}[{\cite[Example~6.40]{bu-pa02}}]\label{n=2}
Assume that $n=2$ in~\eqref{rquad} and the 2-dimensional polytope
$P$ corresponding to~$\mathcal R$ is an $m$-gon, i.e., there are
no redundant inequalities. Then $\mathcal R$ is an orientable
surface $S_g$ of genus $g=1+2^{m-3}(m-4)$.
\end{proposition}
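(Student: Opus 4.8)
The plan is to compute the Euler characteristic of $\mathcal R=\mathcal R_P$ directly, exploiting the description of $\mathcal R$ as the real moment-angle manifold over an $m$-gon $P$, and then invoke orientability to pin down the genus. The key observation is that $\mathcal R_P$ is glued from $2^{m-2}$ copies of the polygon $P$, one for each element of the group $(\Z/2)^m/($diagonal $\Z/2)$ that acts effectively — more precisely, $\mathcal R_P$ is the quotient of $P\times(\Z/2)^m$ by the face identifications coming from Construction~\ref{dist}, and since $P$ has $m$ facets each homeomorphic to an interval, the $(\Z/2)^m$-action on $\mathcal R_P$ has quotient $P$ with the standard stratification. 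First I would set up the cell structure: take the obvious CW structure on $P$ with $m$ vertices, $m$ edges, and one 2-cell, and pull it back along the projection $\mathcal R_P\to P$. A vertex of $P$ lies on two facets, so its preimage in $\mathcal R_P$ consists of $2^{m-2}$ points; an edge (facet) of $P$ lies on one facet, so its preimage consists of $2^{m-1}$ arcs; the open 2-cell lifts to $2^m$ open 2-cells.

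Next I would assemble the count. This gives
\[
  \chi(\mathcal R_P)=2^{m-2}\cdot m-2^{m-1}\cdot m+2^m
  =2^{m-2}\bigl(m-2m+4\bigr)=2^{m-2}(4-m).
\]
Since $\mathcal R_P$ is a closed surface and, by hypothesis (the polygon case with no redundant inequalities, so $P$ is Delzant-realisable and in particular the relevant lattice conditions hold — cf.\ Proposition~\ref{rzsmooth} and the discussion preceding Theorem~\ref{delz}), it is orientable, we have $\chi=2-2g$, whence
\[
  2-2g=2^{m-2}(4-m),\qquad
  g=1-2^{m-3}(4-m)=1+2^{m-3}(m-4),
\]
which is the claimed formula. (As a sanity check: $m=3$ gives $g=0$, i.e.\ $\mathcal R\cong S^2$, consistent with Example~\ref{1quad} since a triangle is $\Delta^2$; $m=4$ gives $g=1$, the torus $\mathcal R\cong S^1\times S^1$; $m=5$ gives $g=5$, matching Example~\ref{3quad}.)

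The two places that need care, and where I expect the real work to sit, are: (i) justifying that $\mathcal R_P$ is genuinely built by gluing $2^m$ copies of $P$ with the asserted incidence pattern — this is the content of the standard identification $\mathcal R_P\cong (P\times(\Z/2)^m)/{\sim}$, where $(\mb x,\epsilon)\sim(\mb x,\epsilon')$ iff $\epsilon-\epsilon'$ lies in the coordinate subgroup spanned by $\{e_i:\mb x\in F_i\}$, and it follows from Construction~\ref{dist} together with the fact that genericity of the presentation makes the facets $F_i$ of $P$ pairwise transverse intervals meeting two at a vertex; and (ii) confirming orientability, which one can see either from the fact that $\mathcal R_P$ is the fixed-point set pattern of a torus action making it a small-cover-type object over a polygon, or more concretely because the monodromy of the gluing around each vertex is a product of two commuting reflections, hence an even permutation of sheets preserving a coherent orientation. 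Once these structural points are in place the Euler-characteristic bookkeeping above is immediate, so the main obstacle is purely the combinatorial geometry of the face gluing, not the genus extraction.
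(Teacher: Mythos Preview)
The paper does not prove this proposition; it merely cites \cite[Ex.~6.40]{bu-pa02} and moves on. Your Euler-characteristic computation via the canonical $(\Z/2)^m$-cover $P\times(\Z/2)^m\to\mathcal R_P$ is exactly the standard argument from that reference, and the cell count and arithmetic are correct.

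One point deserves tightening: your justification of orientability is roundabout and partly misleading. Delzant-realisability of $P$ is irrelevant here (and the proposition does not assume any lattice condition). The clean way to see that $\mathcal R_P$ is orientable is to observe that by Proposition~\ref{rzsmooth} it is a transverse intersection of $m-2$ real hypersurfaces in~$\R^m$; its normal bundle is therefore trivial, so $\mathcal R_P$ is orientable. With that in place, the genus formula follows exactly as you wrote.
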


The moment-angle manifold $\mathcal Z$ corresponding to a polygon
is a connected sum of sphere products~\cite[Theorem~6.3]{bo-me06}.

If $n=2$ and there are $k$ redundant inequalities
in~\eqref{ptope}, then $P$ is an $(m-k)$-gon. In this case
$\mathcal R\cong\mathcal R'\times(S^0)^k$, where $\mathcal R'$
corresponds to an $(m-k)$-gon without redundant inequalities. That
is, $\mathcal R$ is a disjoint union of $2^k$ surfaces of genus
$1+2^{m-k-3}(m-k-4)$. Similarly, $\mathcal Z\cong\mathcal
Z'\times(S^1)^k$.

The $H$-minimal Lagrangian submanifold $N\subset\C^m$
corresponding to $\mathcal R$ from Proposition~\ref{n=2} is the
total space of a bundle over $T^{m-2}$ with fibre~$S_g$. This is
an aspherical manifold (for $m\ge4$) whose fundamental group is
included in the short exact sequence
\[
\begin{CD}
  1@>>>\pi_1(S_g)@>>>\pi_1(N)@>>>\Z^{m-2}@>>>1.
\end{CD}
\]

For $n>2$ and $m-n>3$ the topology of $\mathcal R$ and $\mathcal
Z$ is even more complicated, see~\cite{bu-pa02},~\cite{gi-lo09}
and~\cite[\S{}III.2]{pano10} for more results in this direction.

\end{document}